\pdfoutput=1
\documentclass[12pt, oneside]{amsart}

\usepackage{hyperref}
\hypersetup{%
	pdfpagemode={UseOutlines},
	bookmarksopen,
	pdfstartview={FitH},
	colorlinks,
	linkcolor={blue},
	citecolor={blue},
	urlcolor={blue}
}

\usepackage{geometry} 

\usepackage{dcolumn}
\newcolumntype{d}{D{.}{.}{-1} } 

\newcommand\salta[1]{}

\usepackage{amsmath, amssymb, amsthm}
\usepackage{mathrsfs}
\usepackage{aligned-overset}
\usepackage{mathtools}
\usepackage[numbers]{natbib}
\usepackage{xcolor}

\numberwithin{equation}{section}

\newtheorem{theorem}{Theorem}[section]

\newtheorem{lemma}[theorem]{Lemma}

\theoremstyle{remark}
\newtheorem{remark}[theorem]{Remark}

\newcommand{\R}{\mathbb{R}} 
\newcommand{\N}{\mathbb{N}} 
\newcommand{\Fock}{\mathcal{F}} 
\newcommand{\C}{\mathbb{C}} 
\renewcommand{\H}{\mathcal{H}} 
\newcommand{\G}{\mathcal{G}} 
\renewcommand{\S}{\mathcal{S}} 
\newcommand\eps\varepsilon
\renewcommand\k{\textsc{k}} 
\newcommand{\m}{\textsc{m}}
\renewcommand{\j}{\textsc{j}}

\newcommand\UU{{\mathcal U}}
\renewcommand\Re{\operatorname{Re}}

\newcommand{\commFe}[1]{{\color{blue} \textbf{*** {#1} ***}}}
\newcommand{\commFa}[1]{{\color{red} \textbf{*** {#1} ***}}}

\title[Existence of extremizers for partial sums of Toeplitz eigenvalues]{On the existence of extremizers for the sum of eigenvalues of Toeplitz operators}
\author{Fabio Nicola}
\address[Fabio Nicola]{Dipartimento di Scienze Matematiche, Politecnico di Torino, Corso Duca degli Abruzzi 24, 10129 Torino, Italy}
\email{fabio.nicola@polito.it}

\author{Federico Riccardi$^*$}
\address[Federico Riccardi]{Dipartimento di Scienze Matematiche, Politecnico di Torino, Corso Duca degli Abruzzi 24, 10129 Torino, Italy}
\email{federico.riccardi@polito.it}

\author{Paolo Tilli}
\address[Paolo Tilli]{Dipartimento di Scienze Matematiche, Politecnico di Torino, Corso Duca degli Abruzzi 24, 10129 Torino, Italy}
\email{paolo.tilli@polito.it}

\begin{document}
	
	\thanks{$^*$Corresponding author}
	
    \keywords{Toeplitz operators, eigenvalue optimization, optimal sets, Donoho--Stark conjecture}
    \subjclass[2020]{47B35, 47A75, 49Q10, 49R05, 30H20, 47A30}
    
    \begin{abstract}
        \noindent We prove that,
        among all measurable sets $\Omega\subset\C$ of prescribed Lebesgue measure,
        there exists a set maximizing the sum of the first $\k$ eigenvalues ($\k\geq 1$) of the associated Toeplitz operator on the Fock space. In the Fock setting, the case $\k=1$ is well known, the optimal sets being balls of prescribed measure, whereas for $\k>1$ the existence of optimal sets appears to be new (maximizers are not known explicitly,
        and the optimality of balls remains conjectural). Moreover, under mild assumptions, our proof extends 
        to localization operators associated with abstract wavelet transforms.
        In this broader setting, the result is new even for $\k=1$. As an application, 
        we prove the existence of  optimal sets for the Donoho--Stark concentration problem and its generalization to orthonormal systems.
    \end{abstract}
    
    \maketitle

    \allowdisplaybreaks
    
\section{Introduction}

    In \cite{nicola_riccardi_tilli_partial_sums} it was proved that if a set $\Omega \subset \C$ is
    \emph{radially symmetric} and has finite measure,  then for every $\k \geq 1$ 
    \begin{equation*}
        \sum_{k=1}^{\k} \lambda_k(\Omega) \leq \sum_{k=1}^{\k} \lambda_k(\Omega^*),
    \end{equation*}
    where $\{\lambda_k(\Omega)\}_{k\geq 1}$ are the eigenvalues of the Toeplitz operator $T_{\Omega}$ on the Fock space, and $\Omega^*$ denotes the ball with the same measure as $\Omega$. Thus, among radially symmetric sets of prescribed measure, balls maximize the sum of the first $\k$ eigenvalues.
    If the radial symmetry assumption is removed, however,
    then for $\k=1$ it is known \cite{nicolatilli_fk} that balls remain optimal, whereas
    for $\k>1$ it is not even known whether an optimal set exists. In this paper we answer this existence question
by proving the following result (see Section \ref{sec:notation and setting} for details on the definitions).
    \begin{theorem}\label{th:main theorem for sets}
        Let $s>0$ and $\k \in \N$. Then the supremum
        \begin{equation}\label{eq:supremum sum eigenvalues}
            \sup \left\{ \sum_{k=1}^{\k} \lambda_k(\Omega) \colon |\Omega| = s\right\}
        \end{equation}
        is attained.
    \end{theorem}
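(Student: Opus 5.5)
The plan is to relax the problem from sets to weights, show that the relaxed supremum is attained, and then prove that some maximizer of the relaxed problem is the indicator of a set of measure $s$.

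\textbf{Setup.} For $u\in L^\infty(\C)$ with $0\le u\le 1$ and $\int u = s$, consider the Toeplitz operator $T_u$ with symbol $u$. Since $\|T_u\|\le 1$, and $T_u$ is compact because $u\in L^1$, the relaxed functional
\[
F(u)=\sum_{k=1}^{\k}\lambda_k(T_u)=\max\Bigl\{\sum_{k=1}^{\k}\langle T_u f_k,f_k\rangle : \{f_k\}\ \text{orthonormal}\Bigr\}
\]
(Ky Fan) is a supremum of linear functionals in $u$. Hence $F$ is convex, and it is weak-$*$ lower semicontinuous. Convexity gives a clean reduction: on the convex set $\{0\le u\le 1,\ \int u\le s\}$, a maximum, if attained, is attained at an extreme point. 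Extreme points are indicators of sets, so it is enough to show the relaxed supremum over $\{0\le u\le1,\ \int u\le s\}$ is attained. The constraint $\int u\le s$ can then be upgraded to $=s$, since enlarging $\Omega$ increases every eigenvalue by monotonicity of $T_\Omega$ in $\Omega$.

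\textbf{Main obstacle: loss of compactness.} A maximizing sequence $u_n$ may have mass escaping to infinity, or may split into pieces that drift apart. Weak-$*$ limits only bound $\int u\le s$. Moreover, $F$ is only lower semicontinuous, while we need upper semicontinuity.

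\textbf{Handling it by concentration-compactness.} First pick near-optimal orthonormal families $f^n_1,\dots,f^n_\k$, which are eigenfunctions of $T_{u_n}$. We may translate using the Fock-space translation covariance $T_{u(\cdot-z)}=W_zT_uW_z^*$. Then pass to a profile decomposition: up to a subsequence there are translations $z^{(j)}_n$ with $|z^{(j)}_n-z^{(i)}_n|\to\infty$ for $i\ne j$, such that
\[
u_n(\cdot+z^{(j)}_n)\rightharpoonup^* u^{(j)}, \qquad \sum_j\int u^{(j)}\le s,
\]
and the eigenfunctions, which are equivalently elements of a $\k$-dimensional space, split accordingly. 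The key estimate uses the off-diagonal Gaussian decay of the Fock reproducing kernel, $|K(z,w)|=e^{-\pi|z-w|^2/2}$. Interaction terms between widely separated pieces vanish, which yields
\[
\limsup_n F(u_n)\le \max\Bigl\{\sum_j F_{\k_j}(u^{(j)}) : \sum_j \k_j=\k\Bigr\},
\]
where $F_{\k_j}$ is the sum of the first $\k_j$ eigenvalues. On each bounded piece there is genuine convergence: $T_{u_n(\cdot+z_n)}\to T_{u^{(j)}}$ holds in operator norm locally, because the kernel localizes and weak-$*$ convergence of symbols against integrable kernels gives strong convergence. Any remainder contributes nothing, since eigenvalues of $T_v$ for a $v$ that is weakly small on every unit ball tend to $0$.

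\textbf{Closing the argument.} Having the bound above, we must show the configuration can be reassembled into a single admissible symbol. Place the $u^{(j)}$ at mutually huge distances to form $\tilde u$, with $\int\tilde u\le s$. Then
\[
F_\k(\tilde u)\ge \sum_j F_{\k_j}(u^{(j)})-\varepsilon,
\]
and taking the distances to infinity shows that the supremum over the relaxed class is approached, though not yet attained. To get attainment, I would argue by induction on $\k$ with a strict subadditivity inequality
\[
M_\k(s)>\max\bigl(M_{\k_1}(s_1)+M_{\k_2}(s_2)\bigr)
\]
for nontrivial splittings, where $M_\k(s)$ denotes the supremum. If strict inequality fails, the splitting itself gives a maximizer by placing the pieces far apart; but the interaction is then nonzero at finite distance, and by the Gaussian tail it can be made positive, so that strict superadditivity of merging holds. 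Proving this gain, namely that bringing the optimal pieces to a large but finite distance strictly increases $F$, is the delicate step. I would test it with the eigenfunctions of each piece and use the positivity of the Gaussian cross terms after suitable phase choices. Once splitting is excluded, the maximizing sequence is tight up to one translation, and $F$ passes to the limit, which yields a maximizer. Extreme-point reduction plus monotonicity then produces a set $\Omega$ with $|\Omega|=s$.
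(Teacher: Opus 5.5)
Your architecture (induction on $\k$, with dichotomy excluded by a strict superadditivity inequality) matches the paper's Step IV. However, there is a genuine gap at exactly the step you call delicate, and it is the crux of the proof. Your reassembly only gives $F_\k(\tilde u)\ge\sum_jF_{\k_j}(u^{(j)})-\varepsilon$. The strict inequality $M_\k(s)>M_{\k_1}(s_1)+M_{\k_2}(s_2)$ is then left to a gain from Gaussian cross terms with suitable phases at large but finite distance, which you do not carry out and which is not the actual mechanism.

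What is missing is an exact, distance-independent merging inequality. Let $\Omega_1,\Omega_2$ be disjoint with $|\Omega_i|>0$, and let $V$ be a $\k$-dimensional subspace containing the top $\k_1$ eigenvectors of $T_{\Omega_1}$ and the top $\k_2$ eigenvectors of $T_{\Omega_2}$, where $\k_1+\k_2=\k$. By Ky Fan, $\sum_{k\le\k}\lambda_k(T_{\Omega_1\cup\Omega_2})\ge\operatorname{tr}(P_VT_{\Omega_1})+\operatorname{tr}(P_VT_{\Omega_2})$. Each trace equals the corresponding partial eigenvalue sum plus the trace of $T_{\Omega_i}$ over a nonzero subspace of $V$, and that extra term is strictly positive because $\int_{\Omega_i}|f|^2e^{-\pi|z|^2}\,dA>0$ for $f\neq0$. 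No tails, cross terms or phases are involved.

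To apply this you need two things. First, the lower-order suprema must be attained, which your induction provides. Second, they must be attained by \emph{bounded} sets, so that they can be translated to be disjoint. Boundedness holds because, by the bathtub principle, optimal sets are superlevel sets of joint Husimi functions, which vanish at infinity. The endpoint splittings are strict because all $\lambda_k>0$. This is Lemma \ref{lem:convexity of the supremum}. Even the non-strict version makes your remark that ``the splitting itself gives a maximizer'' correct: disjoint translates of bounded lower-order maximizers are then an admissible maximizer. Your $-\varepsilon$ version does not give this.

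Second, the dichotomy bound $\limsup_nF(u_n)\le\max\{\sum_jF_{\k_j}(u^{(j)}):\sum_j\k_j=\k\}$ with \emph{integer} $\k_j$ is asserted, not derived. Vanishing of interaction terms alone does not produce integer splittings, for two reasons. A near-optimal eigenfunction may carry part of its mass near each of two far-apart centers. Also, the weak limit of the orthonormal families is in general neither normalized nor mutually orthogonal, and is only asymptotically orthogonal to the escaping part.

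The paper handles exactly this with two lemmas. Lemma \ref{lem:change the tail without changing the functional} is an $O(\sqrt{\varepsilon})$ correction of the tail that makes it orthogonal to the limit, with limit plus tail orthonormal. Lemma \ref{lem:telescopic sum of the norms} writes $u_F+u_H=\sum_k a_k u_{\widehat M_k}$ as a convex combination of Husimi functions of orthonormal mixed blocks, with $k$ functions from the limit and $\k-k$ from the tail.

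An operator-side substitute is plausible but must be proved. It would need norm-asymptotic orthogonality of far-apart localized compact pieces, hence asymptotic splitting of the top-$\k$ eigenvalue sum, plus $\|T_r\|\to0$ for the vanishing remainder. If you prove it with the actual limit profiles $u^{(j)}$, the exact merging inequality closes the argument directly, without induction or strictness. Replace each profile by a bounded bathtub superlevel set of the same measure, translate these apart, and take the union.

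Finally, your extreme-point reduction needs an argument, since Bauer's principle requires upper semicontinuity, which you do not have. The bathtub principle gives it in one line.
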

    The uniqueness of optimal sets up to rigid motions and the optimality of balls in the general case
    (which we conjecture)
    remain open questions.

    To explain why partial sums of eigenvalues are meaningful in this context, we need to briefly recall the definition of the Fock space and of Toeplitz operators. The Fock space $\Fock$ is the Hilbert space of entire functions on $\C$ that are square-integrable with respect to the Gaussian measure $e^{-\pi|z|^2} dA(z)$. An important class of operators on the Fock space consists of Toeplitz operators $T_{\Omega}$ associated with some subset $\Omega \subset \C$ with finite Lebesgue measure $|\Omega|$. Since these operators are compact and positive, they admit a nonincreasing sequence of eigenvalues
    \begin{equation*}
        \lambda_1(\Omega) \geq \lambda_2(\Omega) \geq \cdots > 0.
    \end{equation*}
    Now, fix an integer $\k\geq 1$ and consider an orthonormal system  
    $F = (f_1, \ldots,f_{\k})$ of $\k$ functions in the Fock space. By the definition of $T_{\Omega}$ it follows that
    \begin{equation}\label{eq:variational connection}
        \sum_{k=1}^{\k} \langle T_{\Omega}f_k, f_k \rangle = \sum_{k=1}^{\k} \int_{\Omega} |f_k(z)|^2 e^{-\pi |z|^2} \, dA(z)
    \end{equation}
    and this sum
    (the trace of $T_\Omega$ restricted to $\mathop{\rm span}\{f_1,\ldots,f_{\k}\}$)
is the energy fraction of the orthonormal system $F$ inside $\Omega$. 
Since the functions in $F$ are entire and pairwise orthogonal, one expects
that this quantity (which cannot exceed $\k$) has some nontrivial upper bound in terms of the measure $s$ of $\Omega$. The connection with the spectrum of $T_\Omega$ is clear:
among all orthonormal systems $F$, the left-hand side of \eqref{eq:variational connection} 
is maximized when $F$ consists of the first $\k$ eigenfunctions of $T_{\Omega}$, and the maximal value 
is then the sum 
    \begin{equation}\label{eq:partial sum of eigenvalues}
        \sum_{k=1}^{\k} \lambda_k(\Omega)
    \end{equation}
    of the first $\k$ eigenvalues of $T_{\Omega}$, which thus
    quantifies the extent to which an orthonormal system of $\k$ functions can be concentrated on the set $\Omega$.
    It is then natural to investigate the existence of sets which, among all sets of prescribed measure, achieve optimal concentration and hence maximize the sum in  \eqref{eq:partial sum of eigenvalues},
    and Theorem \ref{th:main theorem for sets} provides a positive answer to this question. 
    
    Indeed, the case $\k=1$ ---which, as already mentioned, has been solved in \cite{nicolatilli_fk}--- fits into a broad
    line of research on many problems considered in the literature. An important connection is with the Wehrl entropy conjecture \cite{wehrl_entropy} (which can be interpreted as the ``global counterpart'' of the notion of concentration 
    that we consider in this paper) together with its many developments (see for example \cite{frank2023generalized}). Other relevant connections are with similar problems in settings other than the Fock space, see e.g. \cite{garcia_ortega_stability, frank2023sharp, gomez_kalaj_melentijevic_ramos, kalaj1, kulikov, ortega, nicola_stabilizer, ramos-tilli}, and maximization problems for the norms of Toeplitz operators with arbitrary symbols (not necessarily characteristic functions), see e.g. \cite{guo_lin_zhao_restricted_holder, nicolatilli_norm, riccardi_optimal_estimate}. Finally, further connections arise with concentration problems of various time-frequency representations (see e.g. \cite{nicola_romero_trapasso_existence, stra_svela_trapasso_existence, stra_svela_trapasso_existence_2, svela_trapasso_quantum_existence}) and, more generally,
    with \emph{shape optimization} problems for the eigenvalues of the Laplace operator (see e.g. \cite{bucur_buttazzo_book,henrot}).

    If we look at Theorem \ref{th:main theorem for sets} from a slightly different point of view, some possible connections with other problems emerge. In particular, building upon the min-max principle and the variational connection \eqref{eq:variational connection} we can state our theorem equivalently in the following way.
    \begin{theorem}\label{th:main theorem for functions and sets}
        Let $\k \in \N$ and $s>0$. Then the supremum
        \begin{equation}
        \label{supThm12}
            \sup \left\{ \sum_{k=1}^{\k} \int_{\Omega} |f_k(z)|^2 e^{-\pi |z|^2} \, dA(z) \colon |\Omega|=s,\, f_1, \ldots, f_{\k} 
            \textrm{ orthonormal in $\Fock$} \right\}
        \end{equation}
        is attained.
    \end{theorem}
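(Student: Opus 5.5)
The plan is to deduce Theorem \ref{th:main theorem for functions and sets} from Theorem \ref{th:main theorem for sets}, using the variational characterization of partial sums of eigenvalues (Ky Fan's maximum principle) for the compact positive operator $T_\Omega$. The first step is to check that the two suprema \eqref{eq:supremum sum eigenvalues} and \eqref{supThm12} coincide. Fix $\Omega$ with $|\Omega|=s$. For every orthonormal system $F=(f_1,\ldots,f_{\k})$, identity \eqref{eq:variational connection} expresses the quantity in \eqref{supThm12} as $\operatorname{tr}(P_F T_\Omega P_F)$, where $P_F$ is the orthogonal projection onto $\mathop{\rm span}\{f_1,\ldots,f_{\k}\}$. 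By Ky Fan's principle (equivalently, the min-max principle applied to the compression of $T_\Omega$ to a $\k$-dimensional subspace), this trace is at most $\sum_{k=1}^{\k}\lambda_k(\Omega)$. Equality holds when $f_1,\ldots,f_{\k}$ are orthonormal eigenfunctions for $\lambda_1(\Omega),\ldots,\lambda_{\k}(\Omega)$. These exist because $T_\Omega$ is compact, positive and injective, so it has infinitely many positive eigenvalues counted with multiplicity. Therefore, for fixed $\Omega$, the inner supremum over $F$ equals $\sum_{k=1}^{\k}\lambda_k(\Omega)$ and is attained. Taking the supremum over $\Omega$ shows that \eqref{supThm12} equals \eqref{eq:supremum sum eigenvalues}.

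The second step is attainment. By Theorem \ref{th:main theorem for sets} there is a set $\Omega_0$ with $|\Omega_0|=s$ at which \eqref{eq:supremum sum eigenvalues} is attained. Let $f_1,\ldots,f_{\k}$ be the first $\k$ orthonormal eigenfunctions of $T_{\Omega_0}$. Then the pair $(\Omega_0,F)$ gives the value $\sum_{k=1}^{\k}\lambda_k(\Omega_0)$, which is the common supremum. So \eqref{supThm12} is attained.

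The converse implication also holds: if $(\Omega,F)$ attains \eqref{supThm12}, then the chain of inequalities forces $\Omega$ to attain \eqref{eq:supremum sum eigenvalues}. This confirms that the two theorems are equivalent, as the paper asserts.

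The only point that needs care is Ky Fan's inequality in infinite dimensions. Write $T_\Omega=\sum_j\lambda_j\langle\cdot,e_j\rangle e_j$ with $\{e_j\}$ orthonormal eigenfunctions. Then $\sum_k\langle T_\Omega f_k,f_k\rangle=\sum_j\lambda_j w_j$, where $w_j=\sum_k|\langle f_k,e_j\rangle|^2$. These weights satisfy $0\le w_j\le 1$ and $\sum_j w_j\le \k$. Since the $\lambda_j$ are nonincreasing, the sum $\sum_j\lambda_j w_j$ under these constraints is at most $\sum_{j\le \k}\lambda_j$, by a rearrangement argument. Everything else is bookkeeping, and all the genuine difficulty lies in Theorem \ref{th:main theorem for sets}.
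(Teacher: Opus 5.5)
Your Ky Fan argument is correct: the weights $w_j$ satisfy $0\le w_j\le 1$ and $\sum_j w_j\le\k$, and $T_\Omega$ is injective. It is essentially the observation the paper makes in Section~\ref{sec:notation and setting} when it says Theorems~\ref{th:main theorem for sets} and~\ref{th:main theorem for functions and sets} are equivalent. But that equivalence is all you have proved. The step ``By Theorem~\ref{th:main theorem for sets} there is a set $\Omega_0$\ldots'' assumes exactly the existence result that is at stake.

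In the paper, Theorem~\ref{th:main theorem for sets} has no independent proof; the logic runs the other way. The paper proves Theorem~\ref{th:main theorem for J_s} (an orthonormal $F$ maximizing $J_s$ exists) in Section~\ref{sec:proof of the main theorem}. It then obtains Theorem~\ref{th:main theorem for functions and sets} by pairing an optimal $F$ with its superlevel set $\Omega_F(s)$, via the bathtub principle. Only after that does it deduce Theorem~\ref{th:main theorem for sets} from the min-max principle. So your proposal is circular, and, as you concede, it contains none of the actual difficulty.

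What is missing is a compactness argument. The problem has two sources of non-compactness: invariance under the Weyl operators $U_w$, and the failure of weak limits to preserve orthonormality, since part of the system can escape to infinity. The paper handles these in the following steps:
\begin{enumerate}
\item Eliminate $\Omega$ by the bathtub principle and maximize $J_s$ over orthonormal systems.
\item Translate a maximizing sequence so that each $u_{F^{(n)}}$ peaks at $0$. This forces the weak limit $F$ to be nonzero.
\item Rotate by a unitary matrix to make $F$ orthogonal (Lemma~\ref{lem:change the limit to make it orthogonal}).
\item Replace the tail $F^{(n)}-F$ by a tail $H^{(n)}$ that is internally orthogonal and orthogonal to $F$, at cost $O(\sqrt{\eps^{(n)}})$ (Lemma~\ref{lem:change the tail without changing the functional} together with the Lipschitz bound of Lemma~\ref{lem:continuity of J_s}).
\item Make the cross term $\sqrt{u_F\,u_{H^{(n)}}}$ small, using a compact set that carries almost all of $u_F$ while $u_{H^{(n)}}\to 0$ uniformly on it.
\item Write $u_F+u_{H^{(n)}}$ as a convex combination of joint Husimi functions of orthonormal mixed blocks (Lemma~\ref{lem:telescopic sum of the norms}).
\item Induct on $\k$. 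The strict inequality $\S_\m(t)+\S_{\k-\m}(s-t)<\S_\k(s)$ of Lemma~\ref{lem:convexity of the supremum} uses existence for $\m<\k$, boundedness of optimal sets and positivity of $T_\Omega$. It rules out splitting of mass and shows that every component of $F$ is nonzero, so $\widehat F$ is an orthonormal maximizer.
\end{enumerate}
A proof of the statement has to supply an argument of this kind; the reduction to Theorem~\ref{th:main theorem for sets} does not.
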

    This formulation suggests possible connections with other problems involving functional inequalities for orthonormal systems. Some examples include the Strichartz inequality for orthonormal functions \cite{frank_lewin_lieb_seiringer} and $L^p$ bounds for Riesz and Bessel potentials of orthonormal functions \cite{lieb_Lp_bound}. It is plausible that some of the ideas developed in this paper can be transferred to those settings, and we plan to investigate these issues in a subsequent work.

In fact, even though for simplicity the
concentration problem \eqref{eq:supremum sum eigenvalues} has so far been considered only in the particular setting of the Fock space,
in Section \ref{sec:extension to the abstract setting} we extend Theorem \ref{th:main theorem for sets}
to a much more general setting, where Toeplitz operators in the Fock space are replaced with localization operators associated with an abstract wavelet transform (Theorem \ref{th:main theorem abstract setting}). In this abstract setting,
and in many concrete instances that it embraces (see Section \ref{sec:examples}), 
the existence of optimal sets is new even  in the case $\k = 1$.

A particularly relevant example is that of the Paley--Wiener space
        \begin{equation*}
            PW \coloneqq \{f \in L^2(\R) \colon \mathrm{supp}\, \widehat{f} \subseteq [-\pi,\pi] \}
        \end{equation*}
in connection with the well-known Donoho--Stark conjecture \cite{donoho_stark}, which appears to have been settled very recently \cite{abreu_speckbacher_donoho_stark}. This conjecture states that, among all
$f\in PW$ with $\Vert f\Vert_2=1$ and all sets $\Omega\subset\R$ of given Lebesgue measure $|\Omega|=s$, the supremum
\begin{equation}
    \label{supPW}
 \sup\int_\Omega |f(x)|^2\,dx
\end{equation}
is obtained when $\Omega$ is an \emph{interval} of length $s$ and $f$ is the first eigenfunction
(prolate spheroidal wave function) of the corresponding Slepian--Pollak operator. Since for every fixed $\Omega$
the supremum in \eqref{supPW} is equal to the first eigenvalue $\lambda_1(L_{\Omega})$ of the associated localization operator
$L_\Omega$, the Donoho--Stark conjecture  can be reformulated as follows: among all sets $\Omega$ of given measure $s>0$,
 intervals of length $s$ maximize $\lambda_1(L_\Omega)$. As a corollary of our result, we obtain:
\begin{theorem}[Existence of optimal sets for the Donoho-Stark Conjecture]\label{th:theorem PW introduction} Given $s>0$, among all sets $\Omega\subset\R$
of measure $s$ there exist sets maximizing the first eigenvalue $\lambda_1(L_\Omega)$. Moreover, any optimal
set is, up to sets of measure zero, a finite union of intervals.
\end{theorem}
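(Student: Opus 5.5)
Theorem \ref{th:theorem PW introduction} will be obtained as a special case of the abstract existence result (Theorem \ref{th:main theorem abstract setting}) with $\k=1$, followed by a separate analysis of the structure of maximizers. The Paley--Wiener space is a reproducing kernel subspace of $L^2(\R)$ with kernel $K(x,y)=\operatorname{sinc}(x-y)$. Here $\operatorname{sinc}(t)=\sin(\pi t)/(\pi t)$, so that $K(x,x)=1$ and $\|K(\cdot,y)\|_2=1$. The localization operator is $L_\Omega = P\chi_\Omega P$, where $P$ is the orthogonal projection onto $PW$. It is trace class, with $\operatorname{tr}L_\Omega=|\Omega|$.

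\textbf{Existence.} First I will check the hypotheses of the abstract theorem in this setting. The key requirements are as follows.
\begin{itemize}
\item The kernel is translation covariant: $PW$ is invariant under translations, so $\lambda_k(L_{\Omega+a})=\lambda_k(L_\Omega)$.
\item There is a uniform bound $|f(x)|^2\le\|f\|_2^2$ for $f\in PW$.
\item The kernel decays off the diagonal: $\int_{|x-y|>R}|K(x,y)|^2\,dx\to0$ uniformly in $y$, since $\operatorname{sinc}\in L^2$.
\end{itemize}
The mechanism behind the abstract theorem is concentration-compactness. Take a maximizing sequence $\Omega_n$ and weak-$*$ limits $\chi_{\Omega_n}\rightharpoonup\rho$ with $0\le\rho\le1$. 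Relax to densities and use that $\rho\mapsto\lambda_1(L_\rho)$ is convex and weak-$*$ continuous on bounded regions, because the operators are compact. Vanishing is excluded, since it would force $\lambda_1\to0$, while $\lambda_1$ of an interval is positive. Dichotomy is excluded by strict superadditivity of the optimal value $s\mapsto \Lambda(s)$. Finally, pass from an optimal density to an optimal set via the bathtub principle applied to $|f|^2$, where $f$ is an eigenfunction. Since all of this is assumed from the abstract theorem, existence follows once the hypotheses are verified.

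\textbf{Structure of optimal sets.} Let $\Omega$ be optimal and let $f$ be a normalized first eigenfunction. Then $\lambda_1(\Omega)=\int_\Omega|f|^2$, and by optimality $\int_\Omega|f|^2\ge\int_E|f|^2$ for every $E$ with $|E|=s$. By the bathtub principle, $\Omega$ is therefore, up to null sets, a superlevel set: there is $t$ with
\[
\{|f|^2>t\}\subseteq\Omega\subseteq\{|f|^2\ge t\}.
\]
Here $|f|^2=f\bar f$ extends to the entire function $f(z)\overline{f(\bar z)}$, which is of exponential type and not constant. I will then argue as follows.
\begin{itemize}
\item $|f(x)|^2\to0$ as $|x|\to\infty$, by Riemann--Lebesgue applied to $\hat f$. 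Since $|\Omega|=s>0$ forces $\{|f|^2\ge t\}$ to have positive measure, we get $t>0$, and hence $\{|f|^2\ge t\}$ is bounded.
\item On a bounded interval, the real-analytic function $|f|^2-t$ is not identically zero, so it has finitely many zeros.
\end{itemize}
Consequently $\{|f|^2\ge t\}$ differs from $\{|f|^2>t\}$ by a finite set, and it is a finite union of closed intervals. So $\Omega$ agrees with it up to a null set.

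\textbf{Main obstacle.} I expect the main difficulty to lie in verifying the abstract hypotheses, in particular the one that rules out dichotomy. Strict superadditivity $\Lambda(s_1+s_2)>\Lambda(s_1)+\Lambda(s_2)$ must be derived in this setting. My first approach would be to place two near-optimal sets far apart and combine their eigenfunctions; sinc tails give interaction terms that are small but of sign-controllable size. Alternatively, I would invoke whichever formulation of this hypothesis the abstract theorem uses, presumably a nondegeneracy property of the kernel.
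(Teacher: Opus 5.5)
\textbf{The existence half has a genuine gap.} You treat Theorem \ref{th:main theorem abstract setting} as a black box, but you never check the hypotheses it actually has. The properties you verify (translation covariance, $|f|^2\le\|f\|^2$, off-diagonal decay of $\mathrm{sinc}$) are built into the abstract framework. What actually needs checking is different. First, the translation representation on $PW$ is reducible, so the theorem does not apply verbatim. The paper observes that irreducibility is only used to make $W_\psi$ an isometry, and with $\psi=\mathrm{sinc}$ the transform is the identity. Second, the positivity assumption \eqref{assumption:localization operators are positive} must hold. It does, because a nonzero $f\in PW$ is entire and so cannot vanish on a set of positive measure.

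\textbf{Your proposed route to exclude dichotomy would fail.} The step you single out as the crux is false. The inequality $\Lambda(s_1+s_2)>\Lambda(s_1)+\Lambda(s_2)$ cannot hold in general: $\Lambda\le 1$, while $\Lambda(s)\ge\int_{-s/2}^{s/2}\mathrm{sinc}^2\,dx\to1$. So for $s_1=s_2$ large, the right-hand side exceeds $1$. Placing two near-optimal sets far apart gives a first eigenvalue close to the \emph{maximum} of the two, not their sum. For a single function, escaping mass splits the energy as a convex combination, not as a sum.

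\textbf{How the paper handles dichotomy for $\k=1$.} The paper exploits exactly this convex structure, and for $\k=1$ no superadditivity is needed. Your $\Lambda$ is the paper's $\S_1$.
\begin{enumerate}
\item Translate so that $|f_n|^2$ attains its maximum at the origin. The weak limit $f$ is then nonzero, since otherwise $\sup|f_n|^2\to0$.
\item Write $f_n=f+g_n$ and replace $g_n$ by $h_n\perp f$ with $\|f\|^2+\|h_n\|^2=1$ (Lemma \ref{lem:change the tail without changing the functional}). Lemma \ref{lem:continuity of J_s} guarantees that $f+h_n$ is still maximizing.
\item The cross term $\int\sqrt{u_f u_{h_n}}$ tends to zero. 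Lemma \ref{lem:telescopic sum of the norms} with $\k=1$ gives $u_f+u_{h_n}=\|f\|^2u_{\widehat f}+(1-\|f\|^2)u_{\widehat h_n}$.
\item Bound the $\widehat h_n$-term over $\Omega_n$ by $\S_1(s)$ itself. This yields $\S_1(s)\le\|f\|^2J_s(\widehat f)+(1-\|f\|^2)\S_1(s)$, so $\widehat f$ is optimal because $f\neq0$.
\end{enumerate}
The strict inequality of Lemma \ref{lem:convexity of the supremum} compares Ky Fan sums of systems of \emph{different} sizes, and that is where positivity enters. It is needed only in the inductive step $\k>1$.

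\textbf{The structural part is correct.} Your argument that optimal sets are finite unions of intervals is the paper's (a bounded superlevel set of $|f|^2$, Remark \ref{remark:optimal sets are bounded}, plus analyticity). One justification needs fixing. The reason $t>0$ is that $\{|f|^2>t\}\subseteq\Omega$ has finite measure, while $\{|f|^2>0\}$ has full measure. Your stated reason, $|\{|f|^2\ge t\}|>0$, holds for $t=0$ as well.
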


To prove Theorem \ref{th:main theorem for functions and sets}, it is convenient
to use the bathtub principle (see \eqref{eq:bathtub principle}) to eliminate
the dependence on $\Omega$ and regard the 
orthonormal system $F=(f_k)$ as the main unknown. In this formulation (Theorem \ref{th:main theorem for J_s}),
a new set of ideas and tools, developed in Section \ref{sec:preliminary lemmas}, is needed 
in order to apply successfully the direct method
(maximizing sequence $\{F^{(n)}\}$ of orthonormal systems, weak limit $F^{(n)}\rightharpoonup F$, semicontinuity).

Indeed, the normalization and the orthogonality constraints
are not generally preserved  under weak convergence.
As a consequence, if the limit system
$F=(f_k)$ is not orthonormal, its concentration over admissible sets $\Omega$
no longer has the desired variational interpretation.
To overcome these difficulties, we introduce
Lemma \ref{lem:change the limit to make it orthogonal} and
Lemma \ref{lem:change the tail without changing the functional} 
(a geometric lemma valid in any Hilbert space),
which allow us to recover orthonormality in a suitable sense, together with the
disintegration Lemma \ref{lem:telescopic sum of the norms},
which allows us to split the concentration of $F$ into a superposition of concentrations of orthonormal systems of lower cardinality. In the end, we obtain that any maximizing sequence $F^{(n)}$ in fact converges strongly (up to translations) to an orthonormal system $F$, which maximizes the concentration.

This existence proof is presented in Section \ref{sec:proof of the main theorem} in the Fock setting. This approach allows one to focus on the genuine difficulties of the problem and on the new ideas and tools introduced in the paper, without being distracted by minor technicalities (such as the uniform continuity of Husimi functions or their vanishing at infinity), which are straightforward to verify in the Fock setting.

In Section \ref{sec:extension to the abstract setting} we then extend our result to the general framework of localization operators associated with an abstract wavelet transform, under the mild assumption \eqref{assumption:localization operators are positive}.
After introducing the abstract setting and reformulating the concentration problem appropriately, we revisit the key steps of the proof and explain how the argument adapts to the abstract framework. In particular, we show that several properties that are obvious or well known in the Fock setting remain valid in the abstract setting.


   \if \commFa{A number of important questions in harmonic analysis and mathematical physics center on the notion of localization, as it arises in the different manifestations of the uncertainty principle,} and one of the many facets of this everlasting problem concerns determining the optimal concentration for certain classes of functions, especially entire functions. In this paper we mainly focus on the Fock space and on the measurement of concentration via the eigenvalues of Toeplitz operators in this space. \commFa{We then generalize the results established for the Fock space to the setting of the wavelet transform associated with a general square-integrable projective unitary representation of a locally compact group.}   
    
    The Fock space $\Fock$ is the Hilbert space of entire functions on $\C$ that are square-integrable with respect to the Gaussian measure $e^{-\pi|z|^2} dA(z)$ (see Section \ref{sec:notation and setting} for definitions and notation). An important class of operators in this context is that of Toeplitz operators associated with some subset $\Omega \subset \C$ with finite Lebesgue measure $|\Omega|$, which we shall denote by $T_{\Omega}$. Since these operators are compact and positive, they admit a nonincreasing sequence of eigenvalues that we shall denote by
    \begin{equation*}
        \lambda_1(\Omega) \geq \lambda_2(\Omega) \geq \cdots > 0.
    \end{equation*}
    The connection between these eigenvalues and the concentration of functions in the Fock space can be illustrated as follows. Fix an integer $\k \in \N$ and a real number $s>0$. Then, consider a subset $\Omega \subset \C$ such that $|\Omega|=s$ and an orthonormal system of $\k$ functions $F = (f_1, \ldots,f_{\k}) \in \Fock^{\k}$. By the definition of $T_{\Omega}$ it follows that
    \begin{equation}\label{eq:variational connection}
        \sum_{k=1}^{\k} \langle T_{\Omega}f_k, f_k \rangle = \sum_{k=1}^{\k} \int_{\Omega} |f_k(z)|^2 e^{-\pi |z|^2} \, dA(z).
    \end{equation}
    The right-hand side represents the fraction of the energy of the functions in the orthonormal system $F$ inside $\Omega$, whereas the left-hand side is closely related to the first $\k$ eigenvalues of $T_{\Omega}$. In fact, among all possible orthonormal systems $F$, the left-hand side is maximized by choosing the first $\k$ eigenfunctions of $T_{\Omega}$ and the maximal value is exactly the sum of the first $\k$ eigenvalues of $T_{\Omega}$. Therefore, the partial sum
    \begin{equation}
        \sum_{k=1}^{\k} \lambda_k(\Omega)
    \end{equation}
    measures the extent to which an orthonormal system can be concentrated on the set $\Omega$. In light of this interpretation, a natural question is how large this partial sum can be, since
    \begin{equation}\label{eq:supremum partial sum}
        \sup \left\{ \sum_{k=1}^{\k} \lambda_k(\Omega) \colon |\Omega|=s \right\}
    \end{equation}
    measures how well an orthonormal set of $\k$ functions can be concentrated on a set of measure $s$. One may also ask which sets, if any, attain this supremum, thus realizing the best possible concentration. For $\k=1$, that is for the first eigenvalue, these problems were settled in \cite{nicolatilli_fk}, where it was proved that
    \begin{equation}
        \lambda_1(\Omega) \leq \lambda_1(\Omega^*),
    \end{equation}
    where $\Omega^* \subset \C$ is the ball centered at the origin such that $|\Omega|=|\Omega^*|$, and that equality is attained if and only if $\Omega$ is (equivalent up to a set of measure 0) a ball. On the other hand, for $\k > 1$ the authors proved in \cite{nicola_riccardi_tilli_partial_sums} that an analogous result holds, that is
    \begin{equation}
        \sum_{k=1}^{\k} \lambda_k(\Omega) \leq \sum_{k=1}^{\k} \lambda_k(\Omega^*),
    \end{equation}
    under the assumption that $\Omega$ is radially symmetric, while leaving open the existence question in the general case ---that is, without any symmetry assumption. The goal of this paper is to answer this question affirmatively, thus proving that an optimal set exists for every $\k \in \N$ and $s>0$.
    \begin{theorem}\label{th:main theorem for sets}
        Let $\k \in \N$ and $s>0$. Then, the supremum \eqref{eq:supremum partial sum} is attained.
    \end{theorem}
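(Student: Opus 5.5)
We reformulate the problem so that the orthonormal system is the only unknown, and then apply the direct method. For an orthonormal system $F=(f_1,\dots,f_\k)$ in $\Fock$, set
\begin{equation*}
u_F(z)=\sum_{k=1}^{\k}|f_k(z)|^2e^{-\pi|z|^2},
\end{equation*}
a continuous function with $0\le u_F\le \k$ and $\int_\C u_F\,dA=\k$. The bathtub principle gives
\begin{equation*}
J_s(F):=\sup_{|\Omega|=s}\int_\Omega u_F\,dA=\int_{\{u_F>t\}}u_F\,dA+t\bigl(s-|\{u_F>t\}|\bigr),
\end{equation*}
where $t$ is the level at which $|\{u_F>t\}|\le s\le|\{u_F\ge t\}|$. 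By the min-max principle and the variational identity for $T_\Omega$, the supremum in Theorem \ref{th:main theorem for sets} equals $\sup_F J_s(F)$. It is therefore enough to find an orthonormal system $F$ attaining this supremum: a superlevel set of $u_F$ of measure $s$ is then an optimal $\Omega$. Note that $J_s$ is invariant under the Fock translations $f\mapsto f(\cdot-w)e^{\pi\bar w z-\pi|w|^2/2}$, which shift $u_F$, and under unitary rotations of the system.

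We argue by induction on $\k$. The case $\k=1$ is known: balls are optimal, by \cite{nicolatilli_fk}. Write $M_j(s)=\sup J_s$ over systems of cardinality $j$, and take a maximizing sequence $F^{(n)}$. Up to translations and rotations within the span, we aim for $F^{(n)}\rightharpoonup F$ weakly, and we want to exclude loss of mass. Weak convergence in $\Fock$ implies locally uniform convergence of the $f_k$, because point evaluations are continuous and the reproducing kernels are uniformly controlled. So on compact sets $u_{F^{(n)}}\to u_F$ uniformly.

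Next we use a concentration-compactness splitting. After rotating each system by a unitary matrix, the Gram matrix of the weak limit can be diagonalized, so that the limit vectors are orthogonal with norms $\|f_k\|^2=\alpha_k\in[0,1]$. The mass of $u_{F^{(n)}}$ then decomposes as follows. There is a part converging to $u_F$. The remainder escapes to infinity, and we translate it back and extract further profiles. The key tool is a disintegration inequality. Writing the limit $G=(f_k/\sqrt{\alpha_k})$, orthonormal after normalization, we have $u_F=\sum_k\alpha_k|g_k|^2e^{-\pi|z|^2}$. This can be rewritten as a convex superposition $\sum_j(\alpha_{(j)}-\alpha_{(j+1)})u_{G_j}$, where $G_j$ collects the $j$ vectors with the largest $\alpha$ and the $\alpha$'s are ordered decreasingly. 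Combining this with concavity and subadditivity of $J$ with respect to splitting the measure $s$ (given by the bathtub formula), we obtain a bound of the form
\begin{equation*}
\lim J_s(F^{(n)})\le\sum_i c_i M_{j_i}(s_i),\qquad\sum s_i\le s,\quad \sum c_i j_i\le \k .
\end{equation*}
We then need the strict superadditivity statement $M_\k(s)>$ any such nontrivial combination. We would prove this by comparison: place optimal configurations for the pieces far apart and orthogonalize. Orthogonalization costs an exponentially small error, while the gain from using the overlap region is at least of polynomial size, because $u$ is strictly positive everywhere and the extra room $s$ can be exploited. The strict inequality forces a single profile, with all $\alpha_k=1$. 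Hence the limit is orthonormal and the convergence is strong, and $J_s(F)=M_\k(s)$ by uniform convergence on compacts together with tightness.

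The main obstacle is this strict inequality excluding dichotomy, together with correctly handling weak limits that are not orthonormal. If the direct gluing estimate proves too delicate, our first alternative is to replace the limit system by an orthonormal system with the same span, by rotating and completing it. This should not decrease $J_s$, since $u_F\le u_{\tilde F}$ pointwise when the $\alpha_k\le1$. We would then argue via monotonicity of $M_j$ in both $j$ and $s$: a strict increase in $j$ holds because adding an orthonormal vector adds a positive function.
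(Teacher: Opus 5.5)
Your overall architecture matches the paper's: reduce to $J_s$, translate so the weak limit is nonzero, rotate the limit into an orthogonal system, split off an escaping tail, disintegrate into Husimi functions of orthonormal blocks, and induct on $\k$ through a strict superadditivity (your $M_\k$ is the paper's $\S_\k$). The decisive step, however, is misformulated.

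First, the bound $\lim J_s(F^{(n)})\le\sum_i c_iM_{j_i}(s_i)$ with $\sum s_i\le s$ and $\sum c_ij_i\le\k$ is not what the disintegration yields. All the pieces $u_{G_j}$ of $u_F$ live on the same region $\Omega^{(n)}\cap K$, so they are all evaluated at the same measure $t=|\Omega^{(n)}\cap K|$, not at disjoint shares of $s$. Second, your strict inequality over all nontrivial such combinations is false. For $\k=2$ the choice $c_1=2$, $j_1=1$, $s_1=s$ is admissible, yet $M_2(s)\le 2M_1(s)$ because $\lambda_1+\lambda_2\le2\lambda_1$. What one actually gets, up to vanishing errors, is $\sum_{k=0}^{\k}a_k\bigl(M_k(t)+M_{\k-k}(s-t)\bigr)$, with the telescoping weights $a_k$ of Lemma~\ref{lem:telescopic sum of the norms}, $\sum_k a_k=1$ and $M_0:=0$. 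This requires that the tail also be turned into an orthogonal system with squared norms $1-\alpha_k$, which you never address; the paper does this with Lemma~\ref{lem:change the tail without changing the functional}, which also makes the tail orthogonal to $F$. The essential point is the pairing: the limit block carrying the $k$ largest norms and the tail block carrying the complementary $\k-k$ indices carry the same weight $a_k$. These are the paper's mixed blocks $\widehat M_k$, and this bookkeeping is what your proposal lacks.

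Even with the correct bound, gluing far-apart configurations only gives strictness for the brackets with $1\le k\le\k-1$. Your \emph{exponentially small versus polynomial} heuristic is also unsupported. The clean route is this: by the induction hypothesis, optimal sets of lower cardinality exist and are bounded, so they can be translated to be disjoint, and Ky Fan's maximum principle together with positivity of $T_\Omega$ gives strictness.

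The endpoint terms $a_\k M_\k(t)+a_0M_\k(s-t)$ cannot be beaten by gluing. They occur, for example, when $\alpha_1=\dots=\alpha_\k\in(0,1)$, so that a fixed fraction of every function escapes. Hence your conclusion that all $\alpha_k=1$ does not follow from what you wrote. It can be rescued, for instance via $M_\k(s/2)\le M_{\k-1}(s/2)+M_1(s/2)<M_\k(s)$, since one of $t,s-t$ is at most $s/2$; but some such argument is needed.

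The paper avoids the issue altogether. Each mixed block is an orthonormal $\k$-system and the weights sum to one, so the block of index $\j=\max\{k: f_k\neq0\}$, which has $a_\j>0$, must be asymptotically optimal. If $\j<\k$, this contradicts Lemma~\ref{lem:convexity of the supremum}. If $\j=\k$, then $\limsup_n\int_{\Omega^{(n)}}u_{\widehat F}\,dA=\S_\k(s)$, so the normalized weak limit $\widehat F$ is a maximizer whether or not the $\alpha_k$ equal $1$.

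Your fallback does not supply this. The inequality $u_F\le u_{\tilde F}$ compares $J_s(\tilde F)$ with $J_s(F)$. What you need is a comparison with $\lim J_s(F^{(n)})$, which exceeds $J_s(F)$ precisely when mass escapes.
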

    Given the variational connection \eqref{eq:variational connection}, Theorem \ref{th:main theorem for sets} can be equivalently stated as follows.
    \begin{theorem}\label{th:main theorem for functions and sets}
        Let $\k \in \N$ and $s>0$. Then, the supremum
        \begin{equation*}
            \sup \left\{ \sum_{k=1}^{\k} \int_{\Omega} |f_k(z)|^2 e^{-\pi |z|^2} \, dA(z) \colon |\Omega|=s,\, f_1, \ldots, f_{\k} \in \Fock \textrm{ orthonormal system} \right\}
        \end{equation*}
        is attained.
    \end{theorem}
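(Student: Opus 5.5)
By the bathtub principle, for a fixed system $F=(f_1,\dots,f_{\k})$ the inner supremum over $\Omega$ is computed explicitly. Set
\[
u_F(z)=\sum_{k=1}^{\k}|f_k(z)|^2e^{-\pi|z|^2}.
\]
Then $\sup_{|\Omega|=s}\int_\Omega u_F\,dA=\int_0^s u_F^*(t)\,dt=:J_s(F)$, where $u_F^*$ is the decreasing rearrangement, and the supremum is attained by a superlevel set of $u_F$. The statement is therefore equivalent to the existence of an orthonormal system maximizing $J_s$. I would use the direct method. Take a maximizing sequence $F^{(n)}$ of orthonormal systems. Both the orthonormality constraint and $J_s$ are invariant under the Weyl (Fock-space translation) operators, which act as $u_F\mapsto u_F(\cdot-w)$. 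So I may translate each $F^{(n)}$ so that $u_{F^{(n)}}$ attains its maximum at the origin, and then pass to a weakly convergent subsequence $F^{(n)}\rightharpoonup F$.

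The first step is compactness of the densities. Pointwise evaluation $f\mapsto f(z)e^{-\pi|z|^2/2}$ is a bounded functional, uniformly in $z$, and weak convergence in $\Fock$ gives locally uniform convergence of $u_{F^{(n)}}$ to $u_F$. Hence $\int_\Omega u_{F^{(n)}}\to\int_\Omega u_F$ for every bounded $\Omega$. The danger is that a part of the mass of $u_{F^{(n)}}$ on its optimal superlevel sets $\Omega_n$ escapes to infinity. The limit then fails on two counts: $F$ is only a system with Gram matrix $G\le I$, not an orthonormal one, and $J_s(F)$ may be strictly smaller than $\lim J_s(F^{(n)})$. Vanishing, i.e.\ $u_F\equiv 0$, is excluded from the start. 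The normalization at the origin gives $\sup u_{F^{(n)}}\ge c>0$ uniformly: otherwise $J_s(F^{(n)})\le s\sup u_{F^{(n)}}\to 0$, which contradicts the positive lower bound obtained from the ball. So the real issue is dichotomy.

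To handle dichotomy I would argue by induction on $\k$, with $\k=1$ classical. Split $u_{F^{(n)}}=u_F+r_n$, where the remainder $r_n$ is carried asymptotically by functions orthogonal to the limit space and localized far from the origin. Diagonalizing the Gram matrix $G$ lets me write $u_F$ as a superposition (with weights given by the eigenvalues of $G$, all in $[0,1]$) of densities of orthonormal systems of lower cardinality. I expect to reach, for some splitting $s=s_1+s_2$, an inequality of the form
\[
\lim_n J_s(F^{(n)})\le \sum_j \theta_j\,\Phi(\k_j,s_1)+\Phi(\k',s_2),
\]
where $\Phi(\k,s)$ denotes the supremum in the statement. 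A second ingredient should be a Hilbert-space lemma: the tail of a non-orthonormal weak limit can be replaced by genuinely orthonormal vectors placed far away (again via translations), without decreasing the functional. This turns any such limit object back into an admissible competitor.

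The main obstacle is to exclude genuine splitting, namely to show that the right-hand side above is strictly smaller than $\Phi(\k,s)$ unless all the mass stays at the origin. Equivalently, I need a strict superadditivity $\Phi(\k_1+\k_2,s_1+s_2)>\Phi(\k_1,s_1)+\Phi(\k_2,s_2)$ for nontrivial splittings. Here I would first try the following comparison. Take optimizers for the lower-cardinality problems, which exist by induction, and translate them to distance $R$. Orthonormalizing costs only $O(e^{-cR^2})$, because the inner products of translated Fock functions decay like Gaussians. On the other hand, the union of the two optimal superlevel sets fails to be a superlevel set of the combined density. A suitable reallocation of measure near the interface, where $u_1+u_2$ exceeds the threshold on the complement, should then gain an amount that beats the orthonormalization cost for large $R$. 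If this comparison works, the translated maximizing sequence is tight, $F$ is orthonormal, $u_{F^{(n)}}\to u_F$ in $L^1$, and $J_s(F)=\lim J_s(F^{(n)})$. The maximizer of the statement is then $F$ together with a superlevel set of $u_F$ of measure $s$.
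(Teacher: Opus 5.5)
Your reduction (bathtub principle, Weyl translations placing the maximum at the origin, weak limit, exclusion of vanishing, induction on $\k$) matches the paper's, and you correctly locate the crux in a strict superadditivity inequality. However, that inequality is left as a hope, and the mechanism you propose for it does not work as stated. After translating the two lower-cardinality optimizers a distance $R$ apart, both the orthonormalization error and any gain from reshuffling measure near the interface are controlled by the overlap of the two densities, which tends to $0$. Your Gaussian rate $O(e^{-cR^2})$ is not justified either, since nothing is known about the decay of the optimizers. Moreover, at a maximizing split $t$ of $\S_\m(t)+\S_{\k-\m}(s-t)$ the two level thresholds must coincide; otherwise moving a little measure from one set to the other would increase the sum. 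So there the union of the two superlevel sets fails to be a superlevel set of $u_1+u_2$ only through that same vanishing overlap, and you give no reason for the gain to beat the cost.

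The missing idea is that there is no cost at all. Optimal sets are bounded (Remark~\ref{remark:optimal sets are bounded}), so you can translate them to be exactly disjoint, $A\cap B=\emptyset$, which gives $T_{A\cup B}=T_A+T_B$ and $|A\cup B|=s$. Do not orthonormalize the optimal systems $F_1,F_2$ vector by vector. Instead take an orthonormal basis $E$ of a $\k$-dimensional space $V\supseteq\mathrm{span}\,F_1+\mathrm{span}\,F_2$. Then $u_E(z)=\|P_Vk_z\|^2$, so $u_E=u_{F_1}+u_{E'}$, where $E'$ is an orthonormal basis of $V\ominus\mathrm{span}\,F_1\neq\{0\}$ and $u_{E'}>0$ a.e.; likewise $u_E\ge u_{F_2}$. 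Hence for $0<t<s$ one gets $\int_{A\cup B}u_E>\S_\m(t)+\S_{\k-\m}(s-t)$ at any separation. This is exactly the Ky Fan argument of Lemma~\ref{lem:convexity of the supremum}.

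Your reduction to that inequality is also too loose to produce a contradiction. A bound of the form $\sum_j\theta_j\Phi(\k_j,s_1)+\Phi(\k',s_2)$ decomposes $u_F$ on its own and lets the tail enter with weight one, so it does not compare with $\Phi(\k,s)$. For instance, splitting $u_F=\sum_j\|f_j\|^2u_{\widehat f_j}$ after diagonalizing the Gram matrix only gives bounds like $\k\,\Phi(1,s_1)$. The paper proceeds in three moves:
\begin{enumerate}
\item[(1)] By Lemma~\ref{lem:change the tail without changing the functional}, it replaces $G^{(n)}=F^{(n)}-F$ by an orthogonal tuple $H^{(n)}\perp F$ with $F+H^{(n)}$ exactly orthonormal and $\|H^{(n)}-G^{(n)}\|\le C\sqrt{\eps^{(n)}}$; this is harmless by Lemma~\ref{lem:continuity of J_s}.
\item[(2)] It discards the cross term $2\sqrt{u_Fu_{H^{(n)}}}$ using a compact set $K$.
\item[(3)] By Lemma~\ref{lem:telescopic sum of the norms}, it writes $u_F+u_{H^{(n)}}=\sum_{k=0}^{\k}a_ku_{\widehat M_k^{(n)}}$ as a convex combination. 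Each mixed block pairs normalized limit components with normalized tail components into an orthonormal $\k$-system.
\end{enumerate}
Each block contributes at most $\S_\k(s)$. Let $\j$ be the last index with $f_\j\neq0$, so $a_\j=\|f_\j\|^2>0$; then the block $\widehat M_\j^{(n)}$ must be asymptotically optimal. It consists of $\j$ functions essentially living in $K$ and $\k-\j$ functions essentially living off $K$. This gives $\S_\k(s)\le\max_{0\le t\le s}\bigl(\S_\j(t)+\S_{\k-\j}(s-t)\bigr)$, which strict superadditivity rules out unless $\j=\k$. In that case $\widehat F$ is an orthonormal maximizer.
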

    In Section \ref{sec:notation and setting} we fix the notation and the framework of the problem. Then, in Section \ref{sec:preliminary lemmas} we prove some lemmas (which may be of independent interest, in particular Lemma \ref{lem:change the tail without changing the functional}) that will be helpful in the proof of the main theorems, which is given in Section \ref{sec:proof of the main theorem}. We do not directly prove Theorem \ref{th:main theorem for sets}, but rather the equivalent statement in Theorem \ref{th:main theorem for J_s} in which only orthonormal systems in $\Fock$ are involved and the corresponding optimal sets arise consequently. \commFe{Questa parte l'ho scritta rapidamente e solo per dare un'idea; è sicuramente da migliorare} The proof is based on the direct method of the calculus of variations, thus considering a maximizing sequence of orthonormal sets $F^{(n)} \in \Fock^{\k}$ and proving that it converges strongly to an orthonormal set $F \in \Fock^{\k}$. The main difficulties that arise are due to the translation invariance of the Fock space under the Weyl unitary operators \eqref{eq:Weyl unitary operators} and the fact that orthonormality is not preserved under weak convergence. We overcome this problem with the aid of the lemmas in Section \ref{sec:preliminary lemmas} and a careful analysis of the right-hand side of \eqref{eq:variational connection} that takes into account the splitting of the maximizing sequence $F^{(n)}$ into its converging part and its component that escapes at infinity.

    In conclusion, in Section \ref{sec:extension to the abstract setting} we extend our results in a much more general setting. In fact, a careful analysis of the lemmas in Section \ref{sec:preliminary lemmas} and the proof in Section \ref{sec:proof of the main theorem} reveals that the proof still works with minor adjustments if, instead of Toepltiz operators, one considers localization operators associated with an abstract wavelet transform satisfying the mild assumption \eqref{assumption:localization operators are positive}. Then, we show several examples in which our result appplies. We point out that in some of these examples the existence of optimal sets is new even for the first eigenvalue, so with $\k = 1$.
        
    \commFa{Magari da qualche parte si possono aggiungere riferimenti ai problemi di ottimizzazione per sistemi ortonomali --- Lieb, Frank, etc.} \commFe{Intendi nell'introduzione? Oppure si riesce addirittura ad aggiungere una sottosezione nella sezione 4?}
    
    \commFa{Inoltre enfatizzerei anche la connessione con la problematica della congettura di Wehrl, di cui questi problemi possono vedersi come una versione locale} \commFe{Ottima idea, bisogna però pensare un attimo su come inserirlo visto che (se non sbaglio) la connessione c'è ``solo'' per $\k=1$. In ogni caso adesso bisognerà trovare un escamotage per citare i molti paper che sono collegati con questo argomento.} \fi

\section{Preliminary results}\label{sec:preliminary results}

\subsection{Notation and setting}\label{sec:notation and setting}

The Fock space, denoted by $\Fock$, is the Hilbert space of all entire functions $f \colon \C \to \C$ such that
\begin{equation*}
    \|f\|^2 = \int_{\C} |f(z)|^2 e^{-\pi |z|^2} \, dA(z) < \infty,
\end{equation*}
where $dA(z)$ denotes the Lebesgue measure on $\C$,
with the natural inner product
\[
\langle f,g\rangle =\int_{\C} f(z)\overline{g(z)} e^{-\pi |z|^2} \, dA(z)
\]
inherited from $L^2=L^2(\C, e^{-\pi |z|^2} dA(z))$.
Assuming familiarity with the basic theory of Fock spaces 
(see \cite{zhu_book} for details), the main objects of interest for our purposes
are the 
Weyl unitary operators $U_w:\Fock\to\Fock$, defined for $w\in\C$ by
\begin{equation}\label{eq:Weyl unitary operators}
    U_wf(z) = f(z-w) e^{\pi z \overline{w}} e^{-\pi |w|^2/2}, \quad z \in \C,
\end{equation}
and the \emph{Husimi function}
\begin{equation}\label{eq:definition Husimi function}
    u_f(z) \coloneqq |f(z)|^2 e^{-\pi |z|^2},    \quad z \in \C
\end{equation}
associated with $f\in\Fock$ or, more generally, the \emph{joint Husimi function}
\begin{equation}\label{eq:defjointHusimi}
    u_F(z) \coloneqq \sum_{k=1}^{\k}|f_k(z)|^2 e^{-\pi |z|^2},
    \quad F=(f_1,\ldots,f_\k)\in \Fock^\k
\end{equation}
associated with a $\k$-tuple $F$ (we use the notation $u_F$ 
for arbitrary tuples, although $F$ will typically be an orthonormal set). We endow the product space $\Fock^{\k}$ with the natural norm
\begin{equation}
\label{normaF}
\Vert F\Vert^2:=\sum_{k=1}^\k \Vert f_k\Vert^2,
\quad F=(f_1,\ldots,f_\k) \in \Fock^{\k}.
\end{equation}

The well-known identity
\begin{equation}\label{eq:translation of the Husimi}
    |U_wf(z)|^2 e^{-\pi |z|^2} = |f(z-w)|^2 e^{-\pi |z-w|^2},\quad f\in \Fock
\end{equation}
reveals that the unitary operators $U_w$ induce a pure shift
$u_{U_w f}(z)=u_f(z-w)$
on the Husimi function of $f$. The same is true for the joint Husimi function
of a tuple 
$F$, namely
\begin{equation}
    \label{shiftH}
u_{U_w F}(z)=u_F(z-w)
\end{equation}
where $F=(f_1,\ldots,f_\k)$
and $U_wF=(U_w f_1,\ldots,U_w f_\k)$.

Some basic properties of the Husimi functions are integrability, in that
\begin{equation}\label{eq:normalization Husimi}
    \int_{\C} u_f(z) \, dA(z) = \|f\|^2,
\end{equation}
boundedness 
\begin{equation}\label{eq:boundedness Husimi}
    u_f(z) 
    \leq \|f\|^2 \|k_z \|^2 = \|f\|^2, \quad z \in \C,
\end{equation}
and vanishing at infinity
\begin{equation*}
    \lim_{|z| \to \infty} u_f(z) = 0.
\end{equation*}
It is clear that all these properties hold also for joint Husimi functions (with $\|f\|$ replaced with $\|F\|$). However, the boundedness estimate \eqref{eq:boundedness Husimi} holds in a stronger form if $F \in \Fock^{\k}$ is an orthonormal set, since in this case it holds
\begin{equation}\label{eq:boundedness Husimi orthonormal set}
    u_F(z) \leq 1, \quad z \in \C.
\end{equation}

If  $P \colon L^2 \to \Fock$ denotes the orthogonal projection onto the Fock space
and  $\Omega \subset \C$ is measurable,
the \emph{Toeplitz operator} on $\Omega$ is defined as
\begin{equation*}
    T_{\Omega} = P \chi_{\Omega} \colon \Fock \to \Fock
\end{equation*}
where $\chi_{\Omega}$ denotes the characteristic function of $\Omega$. 
The associated quadratic form is
\begin{equation}\label{eq:quadratic form Toeplitz operator}
    \langle T_{\Omega} f, f \rangle = \int_{\Omega} |f(z)|^2 e^{-\pi |z|^2}\, dA(z) 
    = \int_{\Omega} u_f(z) \, dA(z),\quad f\in\Fock
\end{equation}
is positive
and, when $\Omega$ has finite measure
$|\Omega|<\infty$, the operator $T_{\Omega}$ is 
compact (in fact trace-class, see \cite{zhu_book}) and positive. Therefore, it admits a nonincreasing sequence of positive eigenvalues, which we will denote by
\begin{equation*}
    1 >  \lambda_1(\Omega) \geq \lambda_2(\Omega) \geq \cdots > 0.
\end{equation*}
Given $\k\geq 1$, our goal is to maximize the sum of the first $\k$ eigenvalues 
\begin{equation}\label{eq:sum of eigenvalues}
    \sum_{k=1}^{\k} \lambda_k(\Omega)
\end{equation}
(also known as the Ky Fan $\k$-norm of $T_{\Omega}$)
among all sets $\Omega$ with prescribed measure $s>0$. For fixed $\Omega$,
by the min-max principle this sum is equal to the maximum of
\begin{equation}\label{eq:variational characterization}
    \sum_{k=1}^{\k} \langle T_{\Omega}f_k,f_k \rangle
    = \int_{\Omega} u_F(z) \, dA(z)
\end{equation}
among all orthonormal systems $F = (f_1, \ldots,f_{\k}) \in \Fock^{\k}$, where 
$u_F$ is the joint 
Husimi function of the tuple $F$ as in \eqref{eq:defjointHusimi}.
The maximum is achieved when $f_1,\ldots,f_{\k}$ are the first $\k$ eigenfunctions of $T_{\Omega}$, and hence for every $s>0$ the supremum in \eqref{eq:supremum sum eigenvalues}
coincides with
\begin{equation}
    \S_{\k}(s) \coloneqq \sup\left\{  \int_\Omega u_F(z)\,dA(z)\,:\,\, \text{$|\Omega|=s$,\,\, 
    $F\in\Fock^\k$ orthonormal system}\right\}.
\end{equation}
This shows that Theorem \ref{th:main theorem for functions and sets}
is equivalent to Theorem \ref{th:main theorem for sets}.
On the other hand, for every orthonormal system $F$,
by the bathtub principle \cite[][Theorem 1.14]{liebloss}
(see also \cite{nicolatilli_fk})
\begin{equation}\label{eq:bathtub principle}
    \int_{\Omega} u_F(z) \, dA(z) \leq \int_{\Omega_F(s)} u_F(z) \, dA(z)\quad
    \text{for every $\Omega$ such that $|\Omega| = s$,}
\end{equation}
with equality if and only if $\Omega = \Omega_F(s)$ (up to sets of measure 0).
Here and throughout, $\Omega_F(s)$ denotes the unique \emph{superlevel set} of $u_F$ of measure
$s$, that is a set of the form
\[
\{u_F>t\}:=\left\{z\in\C\,:\,\, u_F(z)>t\right\}
\]
where $t>0$ is chosen so as to guarantee the measure matching $|\{u_F>t\}|=s$.

More precisely, letting $\mu_F(t) = | \{u_F > t\}|$ $(t>0)$ denote the distribution function of $u_F$, it is well known (see e.g. \cite{nicolatilli_fk,nicola_riccardi_tilli_partial_sums})
that $\mu_F$ is continuous, strictly decreasing, and hence invertible on $(0,\max u_F]$, with values onto $[0,\infty)$.
Thus, denoting by $\mu_F^{-1}:[0,\infty)\to (0,\max u_F]$ the inverse function,
the superlevel set
\begin{equation}\label{defOmegaFs}
    \Omega_F(s) \coloneqq \{ u_F > \mu_F^{-1}(s)\} \subset \C,
\end{equation}
has Lebesgue measure equal to $s$.

 Thus, summing up, for every $F \in \Fock^{\k}$ the integral on the right-hand side of \eqref{eq:variational characterization} is maximized when $\Omega$ is the unique superlevel set of $u_F$ with measure $s$, that is $\Omega_F(s)$.
 Therefore, 
defining the
 \emph{concentration functional} $J_s \colon \Fock^{\k} \to [0,\infty)$ as
\begin{equation}\label{eq:definition of J_s}
    J_s(F) = \int_{\Omega_F(s)} u_F(z) \, dA(z), \quad F \in \Fock^{\k},
\end{equation}
it is clear that Theorem \ref{th:main theorem for functions and sets} 
(and hence also Theorem \ref{th:main theorem for sets})
is equivalent to the following statement, which will be proved in Section \ref{sec:proof of the main theorem}.
\begin{theorem}\label{th:main theorem for J_s}
    Let $s>0$ and $\k\geq 1$. Then, the supremum
    \begin{equation}\label{eq:supremum for J_s}
        \S_{\k}(s) = \sup \left\{ J_s(F) \colon F \in \Fock^{\k} \textrm{ is an orthonormal system} \right\}
    \end{equation}
    is attained by at least one orthonormal system.
\end{theorem}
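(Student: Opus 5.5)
We argue by induction on $\k$, using the direct method together with a concentration-compactness analysis that exploits the translation invariance \eqref{shiftH}. The case $\k=1$ is known, but the argument below also covers it.

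\textbf{Setup and compactness.} Take a maximizing sequence of orthonormal systems $F^{(n)}$, so that $J_s(F^{(n)})\to\S_\k(s)$. Let $z_n$ be a maximum point of $u_{F^{(n)}}$, and replace $F^{(n)}$ by $U_{-z_n}F^{(n)}$; by \eqref{shiftH} this does not change $J_s$. Passing to a subsequence, $F^{(n)}\rightharpoonup F$ weakly in $\Fock^\k$. Weak convergence in $\Fock$ implies locally uniform convergence of the functions $f_k^{(n)}$, hence of the Husimi functions. Since $\Omega_{F^{(n)}}(s)$ has measure $s$ and $u_{F^{(n)}}\le 1$, a maximizing sequence must have $\max u_{F^{(n)}}$ bounded below. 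Indeed, $\S_\k(s)$ is strictly larger than what diffuse configurations yield, for instance $\S_\k(s)\ge \k(1-e^{-s/\k})$ from a ball argument for $\k=1$, applied to $\k$ widely separated translates of the ball-optimal Gaussian. So $u_F(0)>0$ and $F\neq 0$.

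\textbf{Splitting and recovering orthonormality.} Write $F^{(n)}=F+R^{(n)}$ with $R^{(n)}\rightharpoonup 0$. The Gram matrix $G$ of $F$ satisfies $0\le G\le I$, because $G$ is the weak limit of the identity compressed. After a unitary change of basis within the $\k$-tuple, which leaves $u_F$ invariant, we may assume $G=\mathrm{diag}(g_1,\dots,g_\k)$. Since $R^{(n)}\to 0$ locally uniformly, $u_{F^{(n)}}$ asymptotically splits as $u_F$ plus a part escaping to infinity, whose mass is $\k-\sum g_k$. Using the superlevel-set structure, the concentration on a set of measure $s$ splits into a portion $s_1$ near the origin and a portion $s-s_1$ at infinity, giving
\[
\S_\k(s)\le \int_{\Omega_F(s_1)}u_F+\limsup_n \sup_{|E|=s-s_1}\int_E u_{R^{(n)}}.
\]
Next we write $u_F=\sum_k g_k\, u_{\tilde f_k}$ with $\tilde f_k$ orthonormal, and disintegrate it by a layer-cake (telescopic) decomposition in the $g_k$, ordered decreasingly:
\[
u_F=\sum_{j}(g_j-g_{j+1})\,u_{(\tilde f_1,\dots,\tilde f_j)}.
\]
This expresses the concentration of $F$ as a convex-type superposition of concentrations of orthonormal $j$-systems, each bounded by $\S_j$. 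The escaping part $R^{(n)}$ is asymptotically an almost orthonormal $(\k-\text{rank})$-type system, controlled by $\S_{m}$ for smaller $m$. Combining these bounds yields an inequality of the form $\S_\k(s)\le \sum_j c_j \S_j(s_j)$ with $\sum_j c_j j=\k$ and $\sum s_j= s$.

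\textbf{Main obstacle: strict superadditivity.} One must show that this bound forces $g_k=1$ for all $k$ and no escaping mass. That would give strong convergence, an orthonormal limit $F$, and, by continuity of $J_s$ under strong convergence, a maximizer. The key is a strict inequality
\[
\S_{a+b}(s+t)>\S_a(s)+\S_b(t),
\]
together with strict concavity-type behaviour in the weights. To prove it, one would place an optimal or near-optimal $a$-system and $b$-system far apart, modify them to be exactly orthonormal (the overlap is exponentially small), and gain from the interaction. Here I expect to need a geometric Hilbert-space lemma allowing one to change the tail without changing the functional, so that near-orthonormal systems can be corrected to orthonormal ones. Controlling the strict gain, as opposed to a mere non-strict superadditivity, is where I expect the real difficulty. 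The fallback is an induction on $\k$: in any case of loss, the inductive hypothesis supplies maximizers for the smaller problems, and their superposition could then be strictly improved by moving the pieces closer together. This is where the analyticity of the Husimi functions (nonconstancy, and strict monotonicity of $\mu_F$) would be used.
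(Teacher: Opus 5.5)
Your overall architecture matches the paper's: recentring, a nonzero weak limit, a diagonalized Gram matrix, an escaping tail, a layer-cake decomposition, and induction on $\k$. Your two separate layer-cakes are also sound. With $g_0=1$, $g_{\k+1}=0$, $a_k=g_k-g_{k+1}$, $\S_{0}:=0$ and $s_1=|\Omega^{(n)}\cap K|$, they regroup exactly into the mixed blocks of Lemma~\ref{lem:telescopic sum of the norms}:
\[
\S_{\k}(s)\le\sum_{k=0}^{\k}a_k\bigl[\S_{k}(s_1)+\S_{\k-k}(s-s_1)\bigr]+o(1),\qquad \sum_{k=0}^{\k}a_k=1 .
\]
What matters is that $\S_{k}(s_1)$ and $\S_{\k-k}(s-s_1)$ are paired under a single weight; your summary ``$\sum_j c_j j=\k$, $\sum_j s_j=s$'' loses this. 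Since your tail is only asymptotically orthonormal, asymptotic orthonormalization plus Lemma~\ref{lem:continuity of J_s} suffices on your route.

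\textbf{The main gap: strict superadditivity.} Everything rests on
\[
\max_{0\le t\le s}\bigl(\S_{\m}(t)+\S_{\k-\m}(s-t)\bigr)<\S_{\k}(s),\qquad 1\le\m<\k ,
\]
and you leave it unproved. The mechanism you sketch does not produce it. Near-optimal systems placed far apart give only the non-strict inequality. A strict ``interaction gain'' would have to beat the orthonormalization loss uniformly along near-optimal sequences, and nothing in your sketch controls that. ``Moving the pieces closer'' is not an argument.

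The missing idea is Lemma~\ref{lem:convexity of the supremum}:
\begin{enumerate}
\item Use the inductive hypothesis to obtain optimal \emph{sets} for $\S_{\m}(t)$ and $\S_{\k-\m}(s-t)$.
\item These sets are bounded (Remark~\ref{remark:optimal sets are bounded}), so after a translation they are disjoint and $T_{\Omega_1\cup\Omega_2}=T_{\Omega_1}+T_{\Omega_2}$.
\item Ky Fan's maximum principle plus strict positivity of $T_\Omega$ for $|\Omega|>0$ gives strictness at the operator level. No orthonormalization or Hilbert-space lemma is needed.
\item Continuity of $\S_{\m}$ (Lemma~\ref{lem:continuity of the supremum}) upgrades this to the maximum over $t$, which you need because $s_1$ is uncontrolled.
\end{enumerate}

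\textbf{The endgame targets something the bound cannot deliver.} Strict superadditivity forces $a_1=\dots=a_{\k-1}=0$, i.e.\ $g_1=\dots=g_{\k}=g\in(0,1]$. It leaves
\[
\S_{\k}(s)\le g\,\S_{\k}(s_1)+(1-g)\,\S_{\k}(s-s_1),
\]
which yields $g=1$ (strong convergence) only if $t\mapsto\S_{\k}(t)$ is strictly increasing. You do not know that before existence is established.

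You do not need $g=1$. In this case $u_F=g\,u_{\widehat{F}}$ with $\widehat{F}$ orthonormal. The same splitting gives
\[
\S_{\k}(s)\le g\limsup_n\int_{\Omega^{(n)}}u_{\widehat{F}}\,dA+(1-g)\,\S_{\k}(s),
\]
so $J_s(\widehat{F})\ge\S_{\k}(s)$ by the bathtub principle.

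This is how the paper concludes. Every exactly orthonormal mixed block is bounded by $\S_{\k}(s)$. Hence the block of index $\j$ (the last nonzero $f_{\j}$, with weight $a_{\j}=\|f_{\j}\|^2>0$) is asymptotically optimal. Strict superadditivity then forces $\j=\k$, and the normalized weak limit $\widehat{F}$ is a maximizer, with no strong convergence needed.
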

Indeed, if an orthonormal set $F$ achieves the supremum in \eqref{eq:supremum for J_s}, then the same $F=(f_1,\ldots,f_\k)$, coupled with 
its superlevel set $\Omega=\Omega_F(s)$, achieve the supremum in 
\eqref{supThm12}; conversely, if  $F=(f_1,\ldots,f_\k)$ and $\Omega$
achieve the supremum in \eqref{supThm12}, then by \eqref{eq:bathtub principle}
the same $F$ is optimal in \eqref{eq:supremum for J_s}.
\begin{remark}\label{remark:optimal sets are bounded}
    In particular this implies that if an optimal set exists then, up to sets of measure zero, it must be bounded, as it must be the superlevel set of some joint Husimi function, which vanishes at infinity. 
\end{remark}

\begin{remark}\label{remark:translation invariance and monotonicity of J_s}
    By \eqref{eq:definition of J_s} and 
    \eqref{defOmegaFs},
    it is clear that the supremum in \eqref{eq:supremum for J_s}
 is an increasing function  of $s$. Moreover, although the size $\k$
 of the tuples $F$ is usually fixed, 
the supremum in \eqref{eq:supremum for J_s}
 is increasing with respect to $\k$ as well.
\end{remark}

We end this section by recording some further properties of the
functional $J_s$ which will be used in the sequel. 
First, the functional $J_s$ is invariant under the action of
the Weyl unitary operators 
\eqref{eq:Weyl unitary operators} on the tuple $F$, that is
    \begin{equation}
    \label{invarianceJ_s}
        J_s(F)=J_s(U_w F)\quad\forall w\in\C.
    \end{equation}
Indeed, by \eqref{eq:definition of J_s} and \eqref{defOmegaFs}, $J_s(F)$ is defined as the integral of $u_F$
over its superlevel set of measure $s$: the value of this integral is unchanged
if $u_F(z)$ is replaced with $u_F(z-w)$, and thus it coincides with $J_s(U_w F)$
by \eqref{shiftH}.

Moreover the functional $J_s$ is locally Lipschitz continuous
on the product space $\Fock^\k$ endowed with the natural norm \eqref{normaF}. More precisely, we have the following.
    \begin{lemma}
    \label{lem:continuity of J_s}
        Let $F=(f_1,\ldots,f_{\k})$, $G = (g_1, \ldots, g_{\k}) \in \Fock^{\k}$. Then
        \begin{equation}\label{eq:J_s is continuous}
            |J_s(F)-J_s(G)| \leq  (\|F\| + \|G\|) \|F-G\|.
        \end{equation}
    \end{lemma}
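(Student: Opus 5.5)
The plan is to use the characterization of $J_s$ as a supremum. By the bathtub principle \eqref{eq:bathtub principle}, which holds for arbitrary tuples since $u_F$ is just a nonnegative integrable function, we have
\[
J_s(F)=\sup_{|\Omega|=s}\int_\Omega u_F\,dA .
\]
Hence
\[
|J_s(F)-J_s(G)|\le \sup_{|\Omega|=s}\left|\int_\Omega (u_F-u_G)\,dA\right|\le \int_{\C}|u_F-u_G|\,dA .
\]
This reduces the claim to an $L^1$ estimate on the difference of joint Husimi functions. Strictly speaking, the bathtub statement in the paper concerns orthonormal $F$. I will note that the superlevel set of measure $s$ still maximizes among sets of measure $s$ for any $F\in\Fock^\k$. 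If the distribution function fails to be continuous (e.g.\ $F=0$), the definition of $\Omega_F(s)$ should be read with the supremum formulation. In any case, the bathtub principle of \cite{liebloss} gives the supremum identity.

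Next, expand
\[
u_F-u_G=\sum_{k=1}^{\k}\bigl(|f_k|^2-|g_k|^2\bigr)e^{-\pi|z|^2}
\]
and use the pointwise bound
\[
\bigl||a|^2-|b|^2\bigr|=\bigl||a|-|b|\bigr|\,\bigl(|a|+|b|\bigr)\le |a-b|\,\bigl(|a|+|b|\bigr).
\]
Integrating against $e^{-\pi|z|^2}dA$ and applying Cauchy--Schwarz in $L^2$ gives
\[
\int_{\C}|u_F-u_G|\,dA\le\sum_{k=1}^{\k}\|f_k-g_k\|\,\bigl(\|f_k\|+\|g_k\|\bigr).
\]
A second Cauchy--Schwarz, now in $\R^\k$, bounds this by
\[
\|F-G\|\left(\sum_k(\|f_k\|+\|g_k\|)^2\right)^{1/2}\le\|F-G\|\,(\|F\|+\|G\|),
\]
where the last step is Minkowski's inequality in $\ell^2$. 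This is exactly \eqref{eq:J_s is continuous}.

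The only point requiring care is the first step: justifying that $J_s(F)$ equals the supremum over all sets of measure $s$ for general, possibly non-orthonormal, tuples. Only the inequality $\int_\Omega u_F\le J_s(F)$ for $|\Omega|=s$, together with attainment at $\Omega_F(s)$, is needed. Granting this, I will take $\Omega=\Omega_F(s)$ to get $J_s(F)-J_s(G)\le\int_{\Omega_F(s)}(u_F-u_G)$, then swap the roles of $F$ and $G$. The rest is elementary.
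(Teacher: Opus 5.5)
Your proposal is correct and is essentially the paper's proof: the bathtub principle (used at $\Omega_F(s)$, then swapping $F$ and $G$) reduces the claim to bounding $\int_{\C}|u_F-u_G|\,dA$, which is then handled by the pointwise estimate $\bigl||a|^2-|b|^2\bigr|\le|a-b|(|a|+|b|)$ together with Cauchy--Schwarz in $L^2$ and in $\R^\k$. Your remark that the bathtub principle is needed for possibly non-orthonormal tuples is apt, since the paper applies it to an arbitrary $u_G$ without comment.
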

    
    \begin{proof}
        From \eqref{eq:definition of J_s} we have
        \begin{align*}
            J_s(F) 
                   &= \int_{\Omega_F(s)} \left( u_F(z) - u_G(z) \right) \, dA(z) + \int_{\Omega_F(s)} u_G(z) \, dA(z) \\
                   &\leq \int_{\Omega_F(s)} \left( u_F(z) - u_G(z) \right) \, dA(z) + \int_{\Omega_G(s)} u_G(z) \, dA(z) \\
                   &\leq  \int_{\C} \left| u_F(z) - u_G(z) \right| \, dA(z) + J_s(G),
        \end{align*}
        where the first inequality is 
        just the bathtub principle \eqref{eq:bathtub principle}
        applied to $u_G$. Interchanging the roles of $F$ and $G$ we obtain
        \begin{equation*}
            |J_s(F)- J_s(G)| \leq \int_{\C} \left| u_F(z) - u_G(z) \right| \, dA(z)
            \leq \sum_{k=1}^\k\int_{\C} \left| u_{f_k}(z) - u_{g_k}(z) \right| \,dA(z).
        \end{equation*}
       On the other hand, for every single $k$
        we have from Cauchy--Schwarz 
        \begin{align*}
\int_{\C}\left| f_k(z)-g_k(z)\right|\,
        \left| f_k(z)\right| e^{-\pi|z|^2}\,dA(z)\leq
        \Vert f_k-g_k\Vert \, \Vert f_k\Vert,\\
        \int_{\C}\left| f_k(z)-g_k(z)\right|\,
        \left| g_k(z)\right| e^{-\pi|z|^2} \,dA(z)\leq
        \Vert f_k-g_k\Vert \, \Vert g_k\Vert,
        \end{align*}
while        
        it is clear from \eqref{eq:definition Husimi function}
        that
        \[
        \int_{\C}
        \left| u_{f_k}(z) - u_{g_k}(z) \right|\,dA(z)
        \leq \int_{\C}\left| f_k(z)-g_k(z)\right|\,\left(
        \left| f_k(z)\right|+\left|g_k(z)\right|\right) e^{-\pi|z|^2} \,dA(z),
        \]
so that
        \[
        \int_{\C}
        \left| u_{f_k}(z) - u_{g_k}(z) \right|\,dA(z)
\leq \Vert f_k-g_k\Vert \, \left( \Vert f_k \Vert + \Vert g_k\Vert \right).
\]
Thus, summing over $k$ and using Cauchy--Schwarz in $\R^\k$ gives
\[
\sum_{k=1}^\k        \int_{\C}
\left| u_{f_k}(z) - u_{g_k}(z) \right|\,dA(z)
\leq \|F-G\| 
\left(\|F\|+\|G\|\right)
\]
so that, combining with the previous estimate gives the desired result.
\end{proof}

\subsection{Preliminary lemmas}\label{sec:preliminary lemmas}
In this section we gather several lemmas that are going to be useful in the proof of Theorem \ref{th:main theorem for J_s}. The first elementary lemma will allow us to modify the weak limit of a maximizing sequence for the functional $J_s$ \eqref{eq:definition of J_s} in order to turn it into an orthogonal (but not necessarily orthonormal) system without affecting the Husimi function.

\begin{lemma}\label{lem:change the limit to make it orthogonal}
For every  $F = (f_1, \ldots,f_{\k}) \in \Fock^{\k}$ there exists a unitary matrix $U \in \C^{\k \times \k}$ such that, if $G = UF$ 
is the $\k$-tuple $(g_1, \ldots, g_{\k}) \in \Fock^{\k}$ defined by
\begin{equation}\label{defUF}
     \begin{pmatrix}
         g_1(z) \\ \vdots \\ g_\k(z)
    \end{pmatrix}
:= U 
\begin{pmatrix}
         f_1(z) \\ \vdots \\ f_\k(z)
    \end{pmatrix},\quad z\in\C,
\end{equation}
then
\begin{enumerate}
    \item[(i)] the functions in $G=(g_1,\ldots,g_\k)$ are
    pairwise orthogonal;
    \item[(ii)] the joint Husimi function is preserved, that is
    $u_F(z)=u_G(z)$ for every $z\in\C$.
\end{enumerate}
Moreover,  if $F$ is an orthonormal system in $\Fock$, then so is $UF$,
for every unitary matrix $U$.

\end{lemma}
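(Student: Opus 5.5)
The plan is to diagonalize the Gram matrix of $F$. Let $A\in\C^{\k\times\k}$ be the matrix with entries $A_{jk}=\langle f_j,f_k\rangle$. This matrix is Hermitian and positive semidefinite, so by the spectral theorem there is a unitary $U$ with $UAU^*=D$ diagonal, and $D$ has nonnegative entries. We define $G=UF$ as in \eqref{defUF}, so that $g_j=\sum_l U_{jl}f_l$.

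For (i) we compute the inner products by sesquilinearity:
\[
\langle g_j,g_k\rangle=\sum_{l,m}U_{jl}\overline{U_{km}}\langle f_l,f_m\rangle=(UAU^*)_{jk}=D_{jk},
\]
which vanishes for $j\neq k$. Hence the $g_j$ are pairwise orthogonal, with $\|g_j\|^2=D_{jj}$; these norms may vanish, so the system is orthogonal but not necessarily orthonormal.

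For (ii) we work pointwise. For each fixed $z$, the vector $(g_1(z),\ldots,g_\k(z))^T$ is $U$ applied to $(f_1(z),\ldots,f_\k(z))^T$, and $U$ preserves the Euclidean norm on $\C^\k$. Therefore $\sum_k|g_k(z)|^2=\sum_k|f_k(z)|^2$, and multiplying both sides by $e^{-\pi|z|^2}$ gives $u_G(z)=u_F(z)$. We also note that each $g_j$ is a finite linear combination of elements of $\Fock$, so $G\in\Fock^\k$.

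For the final claim, suppose $F$ is orthonormal, so that $A=I$. The same computation then gives, for every unitary $U$, Gram matrix $UIU^*=I$ for $UF$, so $UF$ is again orthonormal. No step here is a real obstacle. The only point needing care is the convention for conjugates in the Gram matrix, which has to be chosen so that the Gram matrix of $UF$ comes out as $UAU^*$ rather than $\overline{U}A U^T$. Either convention works anyway, since a unitary diagonalization exists in both cases; one simply replaces $U$ by $\overline{U}$ if needed.
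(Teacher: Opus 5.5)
Your proposal is correct and matches the paper's proof: you diagonalize the Gram matrix $(\langle f_j,f_k\rangle)$ by a unitary $U$, so that the Gram matrix of $UF$ becomes $UMU^*$. You then get (ii) from the pointwise preservation of the Euclidean norm in $\C^\k$, and the orthonormal case from $UIU^*=I$.
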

\begin{proof}
If $M = (\langle f_j, f_k \rangle) \in \C^{\k \times \k}$ is the Gram matrix associated with $F$ and  $U \in \C^{\k \times \k}$
is an arbitrary matrix, then the Gram matrix associated with $G=UF$
is $UMU^*$. Thus (i) is achieved, if $U$ is any unitary matrix  that diagonalizes $M$. 

 Moreover, for every unitary matrix $U$ and for every $z\in\C$,  by \eqref{defUF}
 the Euclidean norm of the vector $(g_k(z))\in\C^\k$
 coincides with that of $(f_k(z))$, so that $u_G(z)=u_F(z)$
 is immediate from \eqref{eq:defjointHusimi}.

 Finally, if $F$ is orthonormal, then its Gram matrix $M$ is the identity matrix $I$, so that the Gram matrix of $UF$ is $UMU^*=UU^*=I$ as well,
 and $UF$ is also orthonormal.
\end{proof}
Before stating the next lemma, we introduce a notation to denote normalized functions and tuples. In particular, for every $f \in \Fock$ we let
\begin{equation*}
    \widehat{f} \coloneqq \dfrac{f}{\|f\|},
\end{equation*}
with the understanding that $\widehat{f}=0$ if $f = 0$. Similarly, if $F = (f_1, \ldots, f_{\k}) \in \Fock^{\k}$ is a $\k$-tuple, we let
\begin{equation*}
    \widehat{F} = (\widehat{f}_1, \ldots, \widehat{f}_{\k}).
\end{equation*}
\begin{lemma}\label{lem:telescopic sum of the norms}
Let $F = (f_1,\ldots,f_{\k}), H=(h_1, \ldots, h_{\k}) \in \Fock^{\k}$ such that $\|f_k\|^2+\|h_k\|^2 = 1$ for every $k=1,\ldots,\k$. Assume that $F$ is ordered so that $\|f_1\| \geq \|f_2\| \geq \cdots \geq \|f_{\k}\|$ and let
\begin{equation*}
    a_0 = 1-\|f_1\|^2, \quad a_k = \|f_k\|^2-\|f_{k+1}\|^2 \ \  k=1,\ldots,\k-1, \quad a_{\k} = \|f_{\k}\|^2.
\end{equation*}
For every $0 \leq k \leq \k$ we introduce the mixed blocks
\begin{equation}\label{eq:definition mixed blocks}
    \widehat{M}_k=(\widehat{f}_1, \ldots, \widehat{f}_k, \widehat{h}_{k+1}, \ldots, \widehat{h}_{\k}) \in \Fock^{\k},
\end{equation}
with the understanding that $\widehat{M}_0 = \widehat{H}$ and $\widehat{M}_{\k} = \widehat{F}$. Then, it holds
\begin{equation}\label{eq:telescopic sum}
    u_F+u_H = \sum_{k=0}^{\k} a_k u_{\widehat{M}_k}
\end{equation}
\end{lemma}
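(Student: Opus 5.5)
The identity \eqref{eq:telescopic sum} is a pointwise identity between finite sums, so I will prove it by expanding both sides in terms of the normalized Husimi functions $u_{\widehat f_k}$ and $u_{\widehat h_k}$ and comparing coefficients. First I will reduce everything to normalized functions. By definition $u_{f}=\|f\|^2 u_{\widehat f}$ for every $f\in\Fock$; this also holds when $f=0$ under the convention $\widehat f=0$. The joint Husimi function of a tuple is the sum of the individual ones. Hence
\[
u_F+u_H=\sum_{j=1}^{\k}\left(\|f_j\|^2\,u_{\widehat f_j}+\|h_j\|^2\,u_{\widehat h_j}\right)
=\sum_{j=1}^{\k}\left(\|f_j\|^2\,u_{\widehat f_j}+(1-\|f_j\|^2)\,u_{\widehat h_j}\right),
\]
where the second equality uses the hypothesis $\|f_j\|^2+\|h_j\|^2=1$.

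Next I will expand the right-hand side of \eqref{eq:telescopic sum}. From \eqref{eq:definition mixed blocks},
\[
u_{\widehat M_k}=\sum_{j\le k}u_{\widehat f_j}+\sum_{j>k}u_{\widehat h_j}.
\]
Fixing an index $j$, the coefficient of $u_{\widehat f_j}$ in $\sum_k a_k u_{\widehat M_k}$ is $\sum_{k\ge j}a_k$, and the coefficient of $u_{\widehat h_j}$ is $\sum_{k<j}a_k=\sum_{k=0}^{j-1}a_k$. By the definition of the $a_k$ these sums telescope:
\[
\sum_{k=j}^{\k}a_k=\|f_j\|^2,\qquad \sum_{k=0}^{j-1}a_k=1-\|f_j\|^2 .
\]
These are exactly the coefficients obtained in the first step, which proves the identity.

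There is no genuine obstacle here; the argument is pure bookkeeping with the telescoping sums. The ordering assumption $\|f_1\|\ge\cdots\ge\|f_\k\|$ is not used for the identity itself. Its role is to guarantee $a_k\ge0$, so that \eqref{eq:telescopic sum} expresses $u_F+u_H$ as a genuine nonnegative combination of Husimi functions of normalized tuples, and indeed $\sum_k a_k=1$. I will note this explicitly, since that is the form in which the lemma will presumably be applied later.
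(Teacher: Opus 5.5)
Your proof is correct and follows essentially the same route as the paper: write $u_F=\sum_j\|f_j\|^2u_{\widehat f_j}$ and $u_H=\sum_j\|h_j\|^2u_{\widehat h_j}$, use the telescoping identities $\|f_j\|^2=\sum_{k=j}^{\k}a_k$ and $\|h_j\|^2=\sum_{k=0}^{j-1}a_k$, and regroup by $k$ to obtain the mixed blocks. Your remark that the ordering only serves to make $a_k\ge 0$ (with $\sum_k a_k=1$) is accurate; that nonnegativity is what the proof of the main theorem later relies on.
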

\begin{proof}
We start noticing that
\begin{equation*}
    \sum_{k=0}^{\k} a_k = 1, \quad \|f_j\|^2 = \sum_{k=j}^{\k} a_k \quad j=1,\ldots,\k,
\end{equation*}
Moreover, since $\|h_j\|^2 = 1-\|f_j\|^2$ for every $j=1,\ldots,\k$, we also notice that 
\begin{equation*}
    \|h_j\|^2 = \sum_{k=0}^{j-1} a_k, \quad j=1,\ldots,\k.
\end{equation*}
Then, it holds
\begin{align*}
    u_F = \sum_{j=1}^{\k} u_{f_j} = \sum_{j=1}^{\k} \|f_j\|^2 u_{\widehat{f}_j} = \sum_{j=1}^{\k} \sum_{k=j}^{\k}a_k u_{\widehat{f}_j} = \sum_{k=1}^{\k} a_k \sum_{j=1}^{k} u_{\widehat{f}_j},
\end{align*}
and similarly
\begin{align*}
    u_H = \sum_{j=1}^{\k} u_{h_j} = \sum_{j=1}^{\k} \|h_j\|^2 u_{\widehat{h}_j} = \sum_{j=1}^{\k} \sum_{k=0}^{j-1}a_k u_{\widehat{h}_j} = \sum_{k=0}^{\k-1} a_k \sum_{j=k+1}^{\k} u_{\widehat{h}_j}.
\end{align*}
Hence, summing up we obtain
\begin{align*}
    u_F+u_H &= a_0 u_{\widehat{H}} + \sum_{k=1}^{\k-1} a_k \left(\sum_{j=1}^{k} u_{\widehat{f}_j} + \sum_{j=k+1}^{\k} u_{\widehat{h}_j}\right) + a_{\k} u_{\widehat{F}} \\
            &= \sum_{k=0}^{\k} a_k u_{\widehat{M}_k}.
\end{align*}
\end{proof}
\begin{lemma}\label{lem:continuity of the supremum}
    For every $\k \geq 1$ the function $s \mapsto \S_{\k}(s)$ is continuous.
\end{lemma}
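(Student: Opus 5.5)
Since $\S_\k$ is nondecreasing in $s$ (Remark \ref{remark:translation invariance and monotonicity of J_s}), continuity amounts to excluding jumps. Any jump would be controlled by the following quantitative estimate, which we aim to prove: for $0<s<s'$,
\begin{equation*}
0\le \S_\k(s')-\S_\k(s)\le \k\,(s'-s)\cdot C
\end{equation*}
for a suitable constant $C$. The constant $C=1$ should do, thanks to the pointwise bound \eqref{eq:boundedness Husimi orthonormal set}.

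The argument works at the level of a single orthonormal system $F$. Fix $F$ orthonormal. Since $\Omega_F(s)\subset\Omega_F(s')$ (superlevel sets are nested), we have
\begin{equation*}
J_{s'}(F)-J_s(F)=\int_{\Omega_F(s')\setminus\Omega_F(s)} u_F\,dA\le |\Omega_F(s')\setminus\Omega_F(s)|\cdot \sup u_F\le s'-s,
\end{equation*}
where we used $u_F\le 1$ from \eqref{eq:boundedness Husimi orthonormal set}. Hence $J_{s'}(F)\le J_s(F)+(s'-s)\le \S_\k(s)+(s'-s)$. Taking the supremum over orthonormal $F$ gives $\S_\k(s')\le \S_\k(s)+(s'-s)$. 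Combined with monotonicity, this shows that $\S_\k$ is Lipschitz with constant $1$ on $(0,\infty)$, and in particular continuous. (The cruder bound $u_F\le\k$ would also suffice.)

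The only point needing care is the nesting $\Omega_F(s)\subset\Omega_F(s')$ together with $|\Omega_F(s')\setminus\Omega_F(s)|=s'-s$. Both follow directly from \eqref{defOmegaFs}, since $\mu_F^{-1}$ is decreasing, so $\mu_F^{-1}(s')\le\mu_F^{-1}(s)$. Alternatively, the nesting can be avoided entirely. Apply the bathtub principle \eqref{eq:bathtub principle} by choosing any $\Omega\subset\Omega_F(s')$ with $|\Omega|=s$; such a set exists because Lebesgue measure is nonatomic. Then $J_s(F)\ge\int_\Omega u_F\ge J_{s'}(F)-(s'-s)$. I expect no genuine obstacle here. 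If continuity at $s\to0^+$ is also required, the same estimate gives $\S_\k(s)\le s$, so $\S_\k(s)\to 0$.
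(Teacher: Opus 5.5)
Your proof is correct and follows essentially the same route as the paper. For a fixed orthonormal $F$ you bound $J_{s'}(F)-J_s(F)$ by $|\Omega_F(s')\setminus\Omega_F(s)|=s'-s$ using $u_F\le 1$, then take the supremum to get that $\S_{\k}$ is $1$-Lipschitz; your explicit appeal to monotonicity supplies the other inequality, which the paper leaves implicit, and the stray factor $\k\,C$ in your opening display is unnecessary.
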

\begin{proof}
    Let $F \in \Fock^{\k}$ be an orthonormal system. Then, since $0 \leq u_F \leq 1$ (recall \eqref{eq:boundedness Husimi orthonormal set}), for every $0 \leq t \leq s$ it holds
    \begin{equation*}
        J_s(F)-J_t(F) = \int_{\Omega_F(s) \setminus \Omega_F(t)} u_F(z) \, dA(z) \leq |\Omega_F(s) \setminus \Omega_F(t)| = s-t.
    \end{equation*}
    Taking the supremum with respect to $F$ leads to
    \begin{equation*}
        \S_{\k}(s) - \S_{\k}(t) \leq s-t,
    \end{equation*}
    which means that $\S_{\k}$ is 1-Lipschitz.
\end{proof}
\begin{lemma}\label{lem:convexity of the supremum}
    Let $\k \geq 1$. Assume that for every $1 \leq \m < \k$ and for every $s>0$ the supremum $\S_{\m}(s)$ is attained. Then, for every $1 \leq \m < \k$ and for every $s>0$ it holds
    \begin{equation}\label{eq:convexity of the supremum}
        \max_{0 \leq t \leq s} \left( \S_{\m}(t) + \S_{\k-\m}(s-t) \right) < \S_{\k}(s).
    \end{equation}
\end{lemma}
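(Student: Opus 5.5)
The quantity on the left of \eqref{eq:convexity of the supremum} is a maximum because, by Lemma \ref{lem:continuity of the supremum}, $t\mapsto \S_{\m}(t)+\S_{\k-\m}(s-t)$ is continuous on the compact interval $[0,s]$. So it suffices to fix $t\in[0,s]$ and prove $\S_{\m}(t)+\S_{\k-\m}(s-t)<\S_{\k}(s)$. By hypothesis there are orthonormal systems $F\in\Fock^{\m}$ and $G\in\Fock^{\k-\m}$ with $J_t(F)=\S_{\m}(t)$ and $J_{s-t}(G)=\S_{\k-\m}(s-t)$. By Remark \ref{remark:optimal sets are bounded}, their optimal sets $\Omega_F(t)$ and $\Omega_G(s-t)$ are bounded.

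\emph{Endpoint cases.} For $t=0$ we have $\S_{\m}(0)=0$. Extend $G$ by any $\m$ functions that complete it to an orthonormal system $\tilde G\in\Fock^{\k}$. Each added function is a nonzero entire function, so its Husimi function is positive almost everywhere. Hence
\[
\S_{\k}(s)\ \ge\ \int_{\Omega_G(s)}u_{\tilde G}\ >\ \int_{\Omega_G(s)}u_G\ =\ \S_{\k-\m}(s).
\]
The case $t=s$ is symmetric.

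\emph{Interior case $0<t<s$.} Here I will glue the two optimizers far apart. For $w\in\C$ with $|w|$ large, consider the $\k$-tuple $T_w=(F,U_wG)$ and the set
\[
\Omega_w=\Omega_F(t)\cup\bigl(\Omega_G(s-t)+w\bigr).
\]
The two pieces are disjoint for large $|w|$, so $|\Omega_w|=s$. By \eqref{shiftH},
\[
\int_{\Omega_w}u_{T_w}=\S_{\m}(t)+\S_{\k-\m}(s-t)+\int_{\Omega_F(t)}u_{U_wG}+\int_{\Omega_G(s-t)+w}u_F .
\]
The tuple $T_w$ is not orthonormal, but its Gram matrix is $I+E_w$ with $E_w\to0$, since $U_wg\rightharpoonup 0$. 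I will replace $T_w$ by the orthonormal system $H_w=(I+E_w)^{-1/2}T_w$, which spans the same space. Pointwise,
\[
u_{H_w}(z)=T_w(z)^*(I+E_w)^{-1}T_w(z).
\]
Expanding, the first-order correction $-T_w^*E_wT_w$ integrated over $\Omega_w$ involves only products of the off-diagonal entries $\langle f_i,U_wg_j\rangle$ with localized cross integrals $\int_{\Omega_w} f_i\,\overline{U_wg_j}\,e^{-\pi|z|^2}$. Both factors are small, so the total loss is of second order in the overlap.

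\emph{Main obstacle.} The hard step is showing that the gain strictly beats this loss. I would split according to the threshold levels $\tau_1=\mu_F^{-1}(t)$ and $\tau_2=\mu_G^{-1}(s-t)$.
\begin{itemize}
\item If $\tau_1\neq\tau_2$, then $\Omega_w$ is far from being a superlevel set of $u_{T_w}$. Exchanging a small amount of measure from the piece with the lower threshold to the one with the higher threshold gives, by the bathtub principle \eqref{eq:bathtub principle}, a gain $\delta>0$ independent of $w$. Lemma \ref{lem:continuity of J_s} then gives $J_s(H_w)\ge J_s(T_w)-o(1)$, which beats the sum for large $|w|$.
\item If $\tau_1=\tau_2$, I would use the positive cross terms $\int_{\Omega_F(t)}u_{U_wG}$ and compare them with the second-order loss via explicit Gaussian estimates on $U_wG$ near $\Omega_F(t)$. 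As a fallback, I would argue by contradiction: equality would force $\Omega_w$ to be exactly the superlevel set of the real-analytic function $u_{H_w}$. That seems incompatible with its boundary consisting of two far-apart pieces, each of which is a level curve of $u_F$ or of $u_G$ translated by $w$.
\end{itemize}
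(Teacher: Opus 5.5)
\textbf{There is a genuine gap.} Your endpoint cases are sound, and so is your interior argument when $\tau_1\neq\tau_2$. The case $\tau_1=\tau_2$, however, is left unproved, and it is exactly the case the lemma hinges on. Suppose $t^*\in(0,s)$ realizes the maximum in \eqref{eq:convexity of the supremum}, with optimizers $F,G$. The maximal value is $J_{t^*}(F)+J_{s-t^*}(G)$, and it dominates $J_{t^*\pm h}(F)+J_{s-t^*\mp h}(G)$, since the latter is at most $\S_{\m}(t^*\pm h)+\S_{\k-\m}(s-t^*\mp h)$. Using $\frac{d}{dt}J_t(F)=\mu_F^{-1}(t)$, this forces $\mu_F^{-1}(t^*)=\mu_G^{-1}(s-t^*)$.

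For this case you offer only intentions. The ``explicit Gaussian estimates'' on $U_wG$ near $\Omega_F(t)$ are never carried out, and nothing explicit is known about the decay of arbitrary optimizers. The fallback is circular: it presupposes $\int_{\Omega_w}u_{H_w}\ge\S_{\m}(t)+\S_{\k-\m}(s-t)$, which in your gain-minus-loss bookkeeping is precisely what has not been shown. It is also unsupported: superlevel sets of real-analytic functions can have several far-apart components, so no contradiction is in sight.

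\textbf{The missing idea is that no loss needs to be controlled at all.} For an orthonormal system $H$ with span $V$, one has $u_H(z)=\|P_Vk_z\|^2$, where $k_z$ is the normalized reproducing kernel. This quantity is monotone in $V$. Since $\mathrm{span}\,H_w$ contains both $\mathrm{span}\,F$ and $\mathrm{span}\,U_wG$, you get
\[
u_{H_w}\ge u_F \text{ on } \Omega_F(t), \qquad u_{H_w}\ge u_{U_wG} \text{ on } \Omega_G(s-t)+w .
\]
Hence $\int_{\Omega_w}u_{H_w}\ge\S_{\m}(t)+\S_{\k-\m}(s-t)$. Strictness follows because $u_{H_w}-u_F$ is the joint Husimi function of the $\k-\m\ge1$ orthonormal vectors spanning $\mathrm{span}\,H_w\ominus\mathrm{span}\,F$. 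It is therefore positive a.e.\ on $\Omega_F(t)$, which has measure $t>0$.

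The cross terms $\int_{\Omega_F(t)}u_{U_wG}$ play no role. Comparing with $u_{T_w}=u_F+u_{U_wG}$ is what created the spurious obstacle.

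This is the paper's argument in operator form. For disjoint $\Omega_1,\Omega_2$ of positive measure, Ky Fan's maximum principle and positivity of $T_{\Omega_1}$ give
\[
\sum_{k\le\m}\lambda_k(\Omega_1)+\sum_{k\le\k-\m}\lambda_k(\Omega_2)<\sum_{k\le\k}\lambda_k(\Omega_1\cup\Omega_2).
\]
This holds for any $w$ that makes the translated optimal sets disjoint. It needs no limit $|w|\to\infty$, no orthonormalization estimates, and no case distinction on thresholds.
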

\begin{proof}
    We start noticing that the maximum in the left-hand side of \eqref{eq:convexity of the supremum} is well-defined because the function $t \mapsto \S_{\m}(t)+\S_{\k-\m}(s-t)$ is continuous.

    Then, for $t=0$ and $t=s$ it clearly holds
    \begin{equation*}
        \S_{\m}(s) < \S_{\k}(s), \quad \S_{\k-\m}(s) < \S_{\k}(s)
    \end{equation*}
    because we are assuming that $\S_{\m}(s)$ and $\S_{\k-\m}(s)$ are attained.

    Next, assume $t \in (0,s)$. We denote by $\Omega^*_{\m}(t)$ a set that achieves the supremum $\S_{\m}(t)$, and similarly $\Omega^*_{\k-\m}(s-t)$ denotes a set that achieves the supremum $\S_{\k-\m}(s-t)$. Since both $\Omega^*_{\m}(t)$ and $\Omega^*_{\k-\m}(s-t)$ are bounded (see Remark \ref{remark:optimal sets are bounded}), we can find $w \in \C$ such that
    \begin{equation*}
        \Omega^*_{\m}(t) \cap \left( \Omega^*_{\k-\m}(s-t) + w \right) = \emptyset.
    \end{equation*}
    Hence, exploiting the invariance under Weyl operators we have
    \begin{align*}
        \S_{\m}(t) + \S_{\k-\m}(s-t) &= \sum_{k=1}^{\m} \lambda_k(\Omega^*_{\m}(t)) + \sum_{k=1}^{\k-\m} \lambda_k(\Omega^*_{\k-\m}(s-t))\\
                                     &= \sum_{k=1}^{\m} \lambda_k(\Omega^*_{\m}(t)) + \sum_{k=1}^{\k-\m} \lambda_k(\Omega^*_{\k-\m}(s-t)+w)\\
                                     &< \sum_{k=1}^{\k} \lambda_k\left(\Omega^*_{\m}(t) \cup (\Omega^*_{\k-\m}(s-t)+w)\right) \\
                                     &\leq \S_{\k}(s),
    \end{align*}
    where strict inequality is a consequence of the Ky Fan's maximum principle (see \cite[][Problem I.6.15]{bhatia}) and of the fact that Toeplitz operators over sets of positive measure are positive operators, while the last inequality follows because the set $\Omega^*_{\m}(t) \cup (\Omega^*_{\k-\m}(s-t)+w)$ has measure $s$.
\end{proof}

\section{A lemma in Hilbert space}
The next lemma may be of independent interest.

\begin{lemma}\label{lem:change the tail without changing the functional}
Let $\H$ be a Hilbert space with $\dim \H\geq 2\k$ ($\k\geq 1$), and assume that $F=(f_1,\ldots,f_{\k})$, $G = (g_1, \ldots, g_{\k}) \in \H^{\k}$
are such that:
\begin{enumerate}
     \item[(a)]    $F+G$ is an orthonormal system.
    \item[(b)]    $F$ is an orthogonal system 
    with 
    $\|f_j\| \leq 1$ for every $j=1,\ldots,\k$.
    \item[(c)]  $|\langle g_j, f_k \rangle| \leq 
    \eps$ for some $\eps\in(0,1)$ and for every $j,k \in\{ 1,\ldots,\k\}$.
\end{enumerate}
Then, there exist $H = (h_1,\ldots,h_{\k}) \in \H^{\k}$ and a constant $C>0$, depending only on $F$ and $\k$ 
(in particular, independent of $G$ and $\eps$), such that
\begin{enumerate}
    \item[(i)] $F+H$ is an orthonormal system.
    \item[(ii)] $H$ is an orthogonal system and every element of $H$ is orthogonal to every element of $F$.
    \item[(iii)] \label{res: H and G are close} $\|h_j - g_j\| \leq C \sqrt{\varepsilon}$ for every $j\in\{1,\ldots,\k\}$. 
\end{enumerate}
\end{lemma}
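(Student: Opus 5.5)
The plan is to project $G$ onto the orthogonal complement of $V:=\operatorname{span}\{f_1,\ldots,f_\k\}$ and then correct the resulting tuple so that its Gram matrix is \emph{exactly} $D:=\operatorname{diag}(1-\|f_1\|^2,\ldots,1-\|f_\k\|^2)$, using an orthonormalization that moves vectors by an amount controlled by the Gram defect. If $h_j\in V^\perp$ for all $j$ and $\langle h_j,h_k\rangle = (1-\|f_j\|^2)\delta_{jk}$, then (ii) holds. Together with (b) this gives
\[
\langle f_j+h_j,f_k+h_k\rangle=\|f_j\|^2\delta_{jk}+(1-\|f_j\|^2)\delta_{jk}=\delta_{jk},
\]
which is (i). So everything reduces to producing such $h_j$ with $\|h_j-g_j\|\le C\sqrt\eps$.

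\textbf{Step 1 (projection).} Let $P$ be the orthogonal projection onto $V^\perp$. Then $(I-P)g_j=\sum_{k:\,f_k\neq0}\langle g_j,f_k\rangle\|f_k\|^{-2}f_k$. By (c), $\|(I-P)g_j\|\le C_1\eps$, where $C_1$ depends only on $\k$ and on $\min\{\|f_k\|: f_k\neq0\}$, hence only on $F$.

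\textbf{Step 2 (Gram defect).} Expanding $\langle f_j+g_j,f_k+g_k\rangle=\delta_{jk}$ and using (b), (c) gives
\[
\langle g_j,g_k\rangle=(1-\|f_j\|^2)\delta_{jk}+O(\eps).
\]
Combining this with Step 1, the tuple $(Pg_j)$ in $V^\perp$ has Gram matrix $D+E$ with $|E_{jk}|\le C_2\eps$. Note also $\|g_j\|\le 1+\|f_j\|\le 2$.

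\textbf{Step 3 (construction).} Let $d_j=1-\|f_j\|^2$ and $\delta:=\min\{d_j: d_j>0\}$, which depends only on $F$.
For indices with $d_j=0$, set $h_j=0$. Then $\|Pg_j\|^2=E_{jj}\le C_2\eps$, so $\|g_j-h_j\|\le C_1\eps+\sqrt{C_2\eps}$. This is exactly where the $\sqrt\eps$ in (iii) comes from.
For indices with $d_j\ge\delta$, set $e_j:=Pg_j/\sqrt{d_j}$. These vectors have Gram matrix $I+E'$ with $|E'|\le C_2\eps/\delta$. If $\eps\le\eps_0(F)$ is small enough that $\|E'\|\le 1/2$, apply the Löwdin orthonormalization $e'=(I+E')^{-1/2}e$. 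The $e'_j$ are orthonormal, lie in $V^\perp$, and satisfy $\|e'_j-e_j\|\le C_3\eps$. Put $h_j=\sqrt{d_j}\,e'_j$.
Since $e'_j$ lies in the span of the $e_i$ (which is contained in $V^\perp$), these $h_j$ are automatically orthogonal to $F$. They are orthogonal to the $h_i=0$ trivially, and they have the right norms.

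\textbf{Step 4 (large $\eps$).} When $\eps>\eps_0$, choose any orthonormal vectors $e'_1,\ldots,e'_\k$ in $V^\perp$; this is possible because $\dim V^\perp\ge\dim\H-\k\ge\k$, which is where $\dim\H\ge2\k$ is used. Set $h_j=\sqrt{d_j}\,e'_j$. Then (i) and (ii) hold, and $\|h_j-g_j\|\le 3\le (3/\sqrt{\eps_0})\sqrt\eps$.

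\textbf{Main obstacle.} The delicate point is keeping $C$ independent of $G$ and $\eps$ when some $f_k$ are tiny or some $d_j$ are tiny. I handle this by noting that the only relevant quantities, $\min\|f_k\|$ over nonzero $f_k$ and $\min d_j$ over positive $d_j$, are fixed by $F$. The exactly degenerate cases ($f_k=0$ or $d_j=0$) are then treated separately: the first is simply dropped from the projection formula, and the second is handled by $h_j=0$, which costs only the $\sqrt\eps$ rate.
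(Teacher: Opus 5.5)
Your proof is correct, but it is organized differently from the paper's.

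The paper splits the indices by an $\eps$-dependent threshold, $B=\{k:\|v_k\|^2\ge \k C_2\eps\}$ with $v_k=Pg_k$. Strict diagonal dominance of the Gram matrix $A$ then makes $(v_j)_{j\in B}$ linearly independent for \emph{every} $\eps\in(0,1)$. The paper next takes the polar decomposition $X=UA^{1/2}$ of the unnormalized tuple and sets $h_j=UD^{1/2}e_j$. It controls $\|h_j-v_j\|$ by $\|D-A\|_{\mathrm{op}}^{1/2}$ through the matrix inequality $\|D^{1/2}-A^{1/2}\|\le\|D-A\|^{1/2}$. For $j\in S$ it uses fresh orthonormal vectors in $\mathcal{M}^\perp$, which is where $\dim\H\ge2\k$ enters.

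You instead split by the $\eps$-independent dichotomy $d_j=0$ versus $d_j\ge\delta$. You orthonormalize the normalized vectors by L\"owdin's method (the isometric part of their polar decomposition), using a Lipschitz bound on $(I+E')^{-1/2}$ for $\eps\le\eps_0(F)$. The regime $\eps>\eps_0$ you handle by a trivial construction absorbed into the constant, and only there do you need $\dim\H\ge2\k$.

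What your route buys:
\begin{enumerate}
\item[(1)] It avoids the matrix square-root inequality.
\item[(2)] It gives $\|h_j-g_j\|=O(\eps)$ whenever $\|f_j\|<1$.
\item[(3)] It isolates the genuine source of the $\sqrt{\eps}$ loss: indices with $\|f_j\|=1$. There $h_j=0$ is forced, and $\|g_j\|$ can really be of order $\sqrt{\eps}$.
\end{enumerate}

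The price is that your constant depends on the gap $\delta=\min\{1-\|f_j\|^2:\|f_j\|<1\}$ and blows up as some $\|f_j\|\to1^-$. The paper's constant depends on $F$ only through $\sum_{f_i\ne0}\|f_i\|^{-2}$ and is obtained for all $\eps$ in one sweep. Since $F$ is fixed when the lemma is applied in Step II of the main proof, either version suffices.
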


\begin{proof}
The claim is trivial  if $F=(0,\ldots,0)$
(in that case one may take $H=G$ and $C=0$), 
hence we may assume that
$f_i\not=0$ for at least one index $i$, and define
\begin{equation}
\label{defC1}
C_1:=\sum_{\{i \colon f_i \neq 0\}} \dfrac{1}{\|f_i\|^2}.
\end{equation}
This is the first in a sequence of constants 
$C_1,C_2,\ldots$,  depending only on $F$ and $\k$, 
which will eventually determine the constant $C$ in (iii).

\smallskip

\textbf{Step A. } Preliminary estimates. 
First observe that assumptions (a) and (c) together imply that 
\begin{equation}\label{estgjgk}
    |\langle g_j, g_k \rangle| \leq 2 \varepsilon\quad\forall j\neq k.
\end{equation}
Then, letting
$\mathcal{W} = \mathrm{span}\{f_1, \ldots, f_{\k}\} \subset \H$
and
denoting by $P_{\mathcal{W}}$ and $P_{\mathcal{W}^{\perp}}$ the projections onto $\mathcal{W}$ and $\mathcal{W}^{\perp}$ respectively, we define the projected vectors
\begin{equation}\label{defvj}
    v_j := P_{{\mathcal{W}}^{\perp}} g_j = g_j - P_{{\mathcal{W}}} g_j, \quad j = 1,\ldots,\k.
\end{equation}
Every $v_j$ is close to the corresponding $g_j$. Indeed,
from assumption (c) and \eqref{defC1} 
\begin{align}\label{vcloseg}
    \|v_j-g_j\|^2 = \|P_{\mathcal{W}}g_j\|^2 = \sum_{\{i \colon f_i \neq 0\}} \frac {\left\vert \langle g_j, f_i\rangle
    \right\vert^2}
    {\|f_i\|^2}  \leq  C_1 \varepsilon^2,
\end{align}
where in the second equality we used the first part of assumption (b).
Moreover, the $v_j$'s  are  almost mutually 
orthogonal, just as the $g_j$'s are. In fact, from
\begin{equation*}
    \langle g_j, g_k \rangle = \langle P_{\mathcal{W}} g_j + P_{\mathcal{W}^{\perp}}g_j, P_{\mathcal{W}} g_k + P_{\mathcal{W}^{\perp}}g_k \rangle = \langle P_{\mathcal{W}} g_j, P_{\mathcal{W}} g_k \rangle + \langle v_j, v_k \rangle
\end{equation*}
we obtain 
\begin{equation}\label{estvjvk}
    | \langle v_j, v_k \rangle | \leq | \langle g_j, g_k \rangle| + | \langle P_{\mathcal{W}} g_j, P_{\mathcal{W}} g_k \rangle| \leq 2 \varepsilon + C_1 \varepsilon^2 \leq C_2 \varepsilon\quad\forall j\neq k,
\end{equation}
where  $C_2:=2+C_1$. In this estimate  we used \eqref{estgjgk} and
then Cauchy--Schwarz along with the estimates
on $\Vert P_{\mathcal W}g_j\Vert$  
and $\Vert P_{\mathcal W}g_k\Vert$  
that one infers from
\eqref{vcloseg}.

Next, recalling that $\|f_j\|^2\leq 1$,  we define the numbers
\begin{equation}
\label{defdk}
d_j:=1-\|f_j\|^2\geq 0,\quad j=1,\ldots,\k.
\end{equation}
Since $\|f_j+g_j\|^2=1$, we have
\begin{align*}
    &d_j-\|v_j\|^2
    = \|g_j\|^2
+2 \Re \langle f_j,g_j\rangle -\|v_j\|^2
= \|P_{\mathcal W}g_j\|^2+
2 \Re \langle f_j,g_j\rangle
\end{align*}
and hence
from \eqref{vcloseg} and
assumption (c) of the lemma we obtain
\begin{equation}
\label{estdjvj}
\left| \,d_j-\|v_j\|^2\,\right|
\leq
C_1\eps^2+2\eps\leq C_2\eps\quad\forall j\in\{1,\ldots,\k\}.
\end{equation}

\smallskip

\textbf{Step B.} We split the set of subscripts 
$\{1,\ldots,\k\}$ according to 
whether the norm of $v_k$ is ``big'' or ``small'',
letting
\begin{equation}\label{defBS}
    B= \left\{k  \colon \|v_k\|^2 \geq \k C_2 \varepsilon
    \right\}, \quad S = \{1,\ldots,\k\} \setminus B,
\end{equation}
where $C_2=2+C_1$ is the constant already used in
\eqref{estvjvk} and \eqref{estdjvj} (this threshold will ensure that the vectors $(v_j)_{j\in B}$ are linearly independent).

The construction of the
vectors $h_j$,  that appear in the claim of the 
lemma will be different according to whether $j\in B$
or $j\in S$, and will be performed in the next two steps
of the proof.

\medskip

\textbf{Step C.} Here we construct $h_j$ for $j\in B$.
If  $B=\emptyset$,  this step is empty and
the proof proceeds
directly to  Step D.
Hence, after relabelling the indices,
 we  may  assume
that $B = \{1, \ldots \m\}$ for some $\m\in\{1,\ldots, \k\}$.

Consider the finite rank operator 
\[
X:\C^\m\to {\mathcal W}^\perp,\quad Xe_j=v_j,
\quad j\in\{1,\ldots,\m\}
\]
where $\{e_j\}$ is the canonical basis of $\C^\m$,
and the associated Gram matrix
\[
A:=X^*X=\bigl(\langle v_j,v_k\rangle\bigr)_{j,k=1}^{\m}.
\]
From \eqref{estvjvk} and the definition of $B$ in \eqref{defBS} it is clear that
$A$
is strictly diagonally 
dominant, namely
\[
A_{jj}=\| v_j\|^2\geq \k C_2\eps
\quad\text{while}\quad
\sum_{k\neq j}|A_{jk}|
=\sum_{k\neq j}|\langle v_j,v_k\rangle|\leq (\m-1)C_2\eps
<\k C_2\eps.
\]
In particular, 
the vectors $(v_j)_{j \in B}$ are linearly independent so
$X$ is injective and, if $X=UA^{1/2}$ is the polar decomposition of $X$, $U:\C^\m\to{\mathcal W}^\perp$
is an isometry, that is, $U^*U=I_{\C^\m}$.

Introducing the $\m\times\m$ diagonal matrix  
\[
D:= \mathop{\rm diag}(d_1,\ldots,d_\m)
\]
with $d_j$ as in \eqref{defdk},
we can finally define
\[
h_j:=U D^{1/2} e_j,\quad j\in B.
\]
Since   $U:\C^\m\to{\mathcal W}^\perp$ is an isometry,
\begin{equation}
\label{hortog}
\langle h_j,h_k\rangle=
\langle D^{1/2}e_j,D^{1/2}e_k\rangle_{\C^\m}=
d_j \delta_{jk}\quad \forall j,k\in B,
\end{equation}
i.e. the vectors $h_j$ form an orthogonal system in
$\mathcal H$. Moreover  
$h_j\in {\mathcal W}^\perp$, that is
\begin{equation}
\label{hfperp}
\langle h_j,f_i\rangle=0\quad\forall j\in B,\quad
\forall i\in \{1,\ldots,\k\},
\end{equation}
hence by assumption (b) and
\eqref{defdk}
\begin{equation}
\label{fhorton}
\langle f_j+h_j,f_k+h_k\rangle=
\langle f_j,f_k\rangle +
\langle h_j,h_k\rangle
=\| f_j\|^2 \delta_{jk}+
d_j \delta_{jk}=\delta_{jk}\quad \forall j,k\in B,
\end{equation}
i.e. the vectors $f_j+h_j$ ($j\in B$) form an orthonormal
system as claimed in (i). Moreover, since
$U:\C^\m\to {\mathcal W}^\perp$ is an isometry,
for $j\in B$ we have
\begin{equation}
\label{estbathia}
\begin{aligned}
    & \| h_j-v_j\| =\left\| UD^{1/2}e_j-X e_j\right\|
    =\left\| UD^{1/2}e_j-UA^{1/2} e_j\right\|\\
    =& \left\| D^{1/2}e_j-A^{1/2} e_j\right\|_{\C^\m}
    \leq \| D^{1/2}-A^{1/2} \|_{\text{op}}
    \leq \| D-A \|_{\text{op}}^{1/2}
\end{aligned}
\end{equation}
where $\| \cdot \|_{\text{op}}$ denotes the operator norm
for $\m\times\m$ matrices (the last inequality,
valid for arbitrary positive semidefinite matrices $D$ and $A$, is classical,
see e.g. \cite[][Theorem X.1.1]{bhatia}). 

Now observe that each entry of the matrix $D-A$
is bounded by $C_2\eps$: for diagonal entries this
follows from \eqref{estdjvj}, while
for off-diagonal entries this follows from \eqref{estvjvk}
since $D$ is a diagonal matrix.
Therefore, the Hilbert--Schmidt norm
of $D-A$ is bounded  by $\m C_2\eps\leq \k C_2\eps$,
whence $\| D-A\|_{\text{op}}\leq \k C_2\eps$
which, combined with \eqref{estbathia},
gives
$\| h_j-v_j\|\leq (\k C_2\eps)^{1/2} $. As a consequence,
using \eqref{vcloseg} we obtain 
\begin{equation}
    \label{esthgB}
\| h_j-g_j\|\leq \|h_j-v_j\|+\|v_j-g_j\| \leq 
(\k C_2\eps)^{1/2}+
C_1^{1/2}\eps
\leq
C_3\eps^{1/2}
\quad\forall j\in B.
\end{equation}

Observe that, if $\m=\k$ (i.e. if $B=\{1,\ldots,\k\}$)
then the proof is complete with $C=C_3$,
due to \eqref{fhorton}, \eqref{hortog}, \eqref{hfperp}
and \eqref{esthgB}.

\medskip

\textbf{Step D.} Here we construct $h_j$ for $j\in S$,
under the assumption that $S\neq\emptyset$.

Consider the subspace ${\mathcal M}={\mathcal W}+
\mathop{\rm span}\{h_j\,:\, j\in B\}$. 
Since $\dim {\mathcal M}\leq \k+\m$
while $\mathop{\rm dim}{\mathcal H}\geq 2\k$, 
we have $\dim {\mathcal M}^\perp \geq \k-\m=|S|$,
hence
we may choose
an orthonormal system $(u_j)_{j\in S}\subset {\mathcal M}^\perp$
and define
\[
h_j:=\sqrt{d_j}\, u_j,\quad j\in S.
\]
These vectors are mutually orthogonal, 
orthogonal to $\mathcal W$, 
and 
orthogonal to the previously constructed vectors $h_i$,
$i\in B$. 
Since $B\cup S=\{1,\ldots,\k\}$ these properties,
combined with the corresponding properties \eqref{hortog},
\eqref{hfperp}, show that the vectors in
$(h_1,\ldots,h_\k)$ satisfy claim (ii) of the lemma.
Moreover, for $j\in S$, $\| h_j\|^2=d_j=1-\|f_j\|^2$, so that
the vectors $f_j+h_j$, $j\in S$, form an orthonormal system:
this, combined with \eqref{fhorton}, shows that 
claim (i) is also fulfilled (indeed, $\langle
f_i+h_i,f_j+h_j\rangle=0$ holds also 
whenever $i\in B$ and
$j\in S$, by the orthogonality properties established above).

In addition, 
using \eqref{estdjvj} and the definition of $S$ in
\eqref{defBS} we have
\[
\| h_j\|^2=d_j\leq  \left| d_j-\|v_j\|^2\right|+
\| v_j\|^2
< C_2\eps+\k C_2\eps=C_4\eps \quad\forall j\in S
\]
whence, using \eqref{vcloseg} and again the
definition of $S$ in \eqref{defBS},
\begin{equation*}
\|h_j-g_j\|
\leq \|v_j-g_j\| +\|v_j\|+\|h_j\|
\leq \sqrt{C_1}\eps+\sqrt{\k C_2\eps}+\sqrt{C_4\eps}
\leq C_5\sqrt{\eps}
\quad\forall j\in S.
\end{equation*}
Finally this estimate, combined with \eqref{esthgB}, proves
claim (iii), and the proof is complete.
\end{proof}

\section{Proof of the main theorem}\label{sec:proof of the main theorem}

In this section we prove Theorem \ref{th:main theorem for J_s}. We split the proof in several steps.

\textbf{Step I: Identification of the candidate.} The first step of the proof consists in identifying a candidate extremizer. To do so, consider a maximizing sequence $F^{(n)} = (f^{(n)}_1, \ldots, f_{\k}^{(n)}) \subset \Fock^{\k}$, where each $F^{(n)}$ is an orthonormal system. By the translation invariance \eqref{invarianceJ_s} and since Husimi functions vanish at infinity, we can assume that every Husimi function $u_{F^{(n)}}$ achieves its maximum at 0. Being a bounded sequence, we can extract a subsequence (still denoted with the same index) that is weakly convergent to a vector $F=(f_1, \ldots,f_{\k}) \in \Fock^{\k}$. Then, by weak lower semicontinuity of the norm we have $\|f_k\| \leq 1$ for every $k = 1,\ldots,\k$. Moreover, the vector $F$ is not zero. In fact, if we had $F^{(n)} \rightharpoonup 0$ then we would have
\begin{equation}\label{eq:convergence at 0}
    u_{F^{(n)}}(0) = \sum_{k=1}^{\k} |f_k^{(n)}(0)|^2 =\sum_{k=1}^{\k} |\langle f^{(n)}_k, k_0 \rangle|^2 \to 0
\end{equation}
but since every Husimi function achieves its maximum at $0$ this means $\|u_{F^{(n)}}\|_{L^{\infty}} \to 0$, but then
\begin{equation*}
    J_s(F^{(n)}) \leq s \|u_{F^{(n)}}\|_{L^{\infty}} \to 0,
\end{equation*}
hence $F^{(n)}$ would not be a maximizing sequence. Thanks to Lemma \ref{lem:change the limit to make it orthogonal}, we can apply a unitary matrix $U$ to the vector $F$ so that the vector $UF$ becomes an orthogonal (but not necessarily orthonormal) system without affecting either the corresponding Husimi function or the ones of the vectors $F^{(n)}$. Moreover, since $UF^{(n)} \rightharpoonup UF$ and since $UF^{(n)}$ is still an orthonormal system, by weak lower semicontinuity of the norm we have that every component of $UF$ has norm less or equal than 1. Therefore, we can directly assume that the weak limit $F$ forms an orthogonal system and that its components have norm less or equal than one. In addition, we can suppose that $F$ is ordered so that $\|f_1\| \geq \|f_2\| \geq \cdots \geq \|f_{\k}\|$. 

\textbf{Step II: Modify the tail.} The next step is to split the sequence $F^{(n)}$ into the surviving part ---i.e. with a nonzero limit--- and the tail ---i.e. with a zero limit--- and to modify the latter part in order to have better properties. To do so, let $ F^{(n)} - F \eqqcolon G^{(n)} = (g_1^{(n)}, \ldots, g_{\k}^{(n)}) \in \Fock^{\k}$, so that $G^{(n)} \rightharpoonup 0$. Then, we define
\begin{equation*}
    \varepsilon^{(n)} = \max_{j,k=1,\ldots,\k} | \langle f_j, g_k^{(n)} \rangle|.
\end{equation*}
Notice that since $G^{(n)} \rightharpoonup 0$ we have that $\varepsilon^{(n)} \to 0$ as $n \to \infty$. Since we have
\begin{itemize}
    \item $F+G^{(n)}$ is an orthonormal system;
    \item $F$ is an orthogonal system and $\|f_k\| \leq 1$ for every $k=1,\ldots,\k$,
\end{itemize}
we can apply Lemma \ref{lem:change the tail without changing the functional} with $\varepsilon=\varepsilon^{(n)}$ and replace $G^{(n)}$ with $H^{(n)} = (h^{(n)}_1, \ldots, h^{(n)}_{\k}) \in \Fock^{\k}$, where the elements of $H^{(n)}$ are pairwise orthogonal, orthogonal to all the elements of $F$ and such that $\|G^{(n)} - H^{(n)}\|_{\Fock^\k} \leq C \sqrt{\varepsilon^{(n)}}$ ---we highlight that the constant $C$ does not depend on $\varepsilon^{(n)}$ nor on the tuple $G^{(n)}$. Since $\varepsilon^{(n)} \to 0$ this last condition implies that $H^{(n)} \rightharpoonup 0$ and, thanks to Lemma \ref{lem:continuity of J_s}, that $F+H^{(n)}$ is still a maximizing sequence. 

\textbf{Step III: Split and rearrange the functional.} Now that we have identified the tail we can split the functional $J_s$. Fix $\varepsilon \in (0,1)$. Since $F+H^{(n)}$ is a maximizing sequence, for $n$ sufficiently large we have that
\begin{equation}\label{quasiopt}
    \S_\k(s) - \varepsilon \leq J_s(F+H^{(n)}) =  \int_{\Omega^{(n)}} u_{F+H^{(n)}}(z) \, dA(z),
\end{equation}
where we let $\Omega^{(n)} = \Omega_{F+H^{(n)}}(s) \subset \C$. Our goal is to uncouple the contributions coming from $F$ and $H^{(n)}$. We start noticing that it holds
\begin{equation*}
    u_{F+H^{(n)}}(z) \leq u_F(z) + u_{H^{(n)}}(z) + 2 \sqrt{u_F(z) u_{H^{(n)}}(z)} \quad z \in \C,
\end{equation*}
and therefore
\begin{equation}\label{eq:bound Husimi of the sum}
    J_s(F+H^{(n)}) \leq \int_{\Omega^{(n)}} u_{F}(z) + u_{H^{(n)}}(z) + 2\sqrt{u_F(z) u_{H^{(n)}}(z)} \, dA(z).
\end{equation}
Then, we observe that since $u_F$ is integrable there exists a compact $K \subset \C$ (independent of $n$) such that 
\begin{equation}\label{defK}
   \int_{\C\setminus K} u_F(z)\,dA(z)<\eps^2.
\end{equation}
In addition, since weak convergence $H^{(n)} \rightharpoonup 0$ 
implies uniform convergence of $u_{H^{(n)}}$ on compact sets, for $n$ large enough it holds
\begin{equation}
\label{estHonK}
   \int_{K} u_{H^{(n)}}(z)\,dA(z)<\eps^2.
\end{equation}
Combining \eqref{defK} and \eqref{estHonK} we obtain that
\[
\begin{aligned}
&\int_\C  \sqrt{u_F(z) u_{H^{(n)}}(z)}\,dA(z)= \int_K  \sqrt{u_F(z) u_{H^{(n)}}(z)}\,dA(z)+ \int_{\C\setminus K}  \sqrt{u_F(z) u_{H^{(n)}}(z)}\,dA(z)\\
&\leq \| F\| \left(\int_K u_{H^{(n)}}(z)\,dA(z)\right)^{1/2}+\| H^{(n)}\|\left(\int_{\C\setminus K}  u_F(z)\right)^{1/2} \leq 2\sqrt{\k} \eps,
\end{aligned}
\]
where we used the fact that $\| F\|\leq \sqrt\k$ and $\|H^{(n)}\|\leq\sqrt\k$ (recall \eqref{normaF}). Therefore, combining the previous estimate with \eqref{quasiopt} and \eqref{eq:bound Husimi of the sum} we obtain
\begin{equation}\label{splitJ}
\begin{aligned}
    J_s(F+H^{(n)}) &= \int_{\Omega^{(n)}} u_{F+H^{(n)}}(z) \, dA(z) \\
             &\leq  \int_{\Omega^{(n)}} u_{F}(z) + u_{H^{(n)}}(z) \, dA(z)
             +4\sqrt{\k} \varepsilon. 
\end{aligned}
\end{equation}
Finally, using Lemma \ref{lem:telescopic sum of the norms} we can rewrite the last integral so that we highlight the role of the mixed blocks, that is
\begin{equation}\label{eq:functional after block decomposition}
    \int_{\Omega^{(n)}} u_F(z)+u_{H^{(n)}}(z) \, dA(z) = \sum_{k=0}^{\k} a_k \int_{\Omega^{(n)}} u_{\widehat{M}_k^{(n)}}(z) \, dA(z)
\end{equation}
We point out that every mixed block $\widehat{M}_k^{(n)} \in \Fock^{\k}$ consists of at most $\k$ orthonormal functions.

\textbf{Step IV: Only $\widehat{F}$ survives.}
Our goal is to show that, as $n$ goes to infinity, the only block that gives a contribution in \eqref{eq:functional after block decomposition} is the one for $k=\k$, which is the block $\widehat{F}$. To do so, let $\j = \max\{k=1,\ldots,\k \colon f_k \neq 0\}$. Notice that since $F \neq 0$ then $\j \geq 1$. Combining the quasi optimality \eqref{quasiopt}, the splitting of the functional \eqref{splitJ}, the block decomposition \eqref{eq:functional after block decomposition} and the fact the blocks are composed by at most $\k$ orthonormal functions we have that
\begin{align*}
    \S_{\k}(s) - \varepsilon &\leq a_{\j} \int_{\Omega^{(n)}} u_{\widehat{M}_{\j}^{(n)}}(z) \, dA(z) + \sum_{0 \leq k \leq \k,\, k \neq \j} a_k \S_{\k}(s) + 4 \sqrt{\k} \varepsilon\\
                             &= a_{\j} \int_{\Omega^{(n)}} u_{\widehat{M}_{\j}^{(n)}}(z) \, dA(z) + (1-a_{\j}) \S_{\k}(s) + 4 \sqrt{\k} \varepsilon
\end{align*}
Since $a_{\j} \neq 0$ and since $\varepsilon$ is arbitrary, this implies that the mixed block $\widehat{M}_{\j}^{(n)}$ is asymptotically optimal, meaning that
\begin{equation}\label{eq:optimality block J}
    \limsup_{n \to \infty} \int_{\Omega^{(n)}} u_{\widehat{M}_{\j}^{(n)}}(z) \, dA(z) = \S_{\k}(s).
\end{equation}
We will prove by induction on $\k$ that $\j=\k$, thus proving that $\widehat{F}$ is optimal.

The base case $\k=1$ clearly holds because $\j$ is different from zero, therefore the supremum $\S_1(s)$ is attained for every $s>0$.

Then, we pass to the inductive step and we assume that the supremum $\S_{\m}(t)$ is attained for every $t \in [0,s]$ and for every $1 \leq \m < \k$. Assume also that $1 \leq \j < \k$, for otherwise there is nothing to prove. In this situation, the block that is asymptotically optimal is composed by two separate pieces
\begin{equation*}
    F_{\j} = (\widehat{f}_1, \ldots, \widehat{f}_{\j}) \in \Fock^{\j}, \quad H^{(n)}_{\k-\j} = (\widehat{h}^{(n)}_{k+1},\ldots,\widehat{h}^{(n)}_{\k}) \in \Fock^{\k-\j}.
\end{equation*}
We estimate the contribution of the two pieces separately. To estimate the contribution of $F_{\j}$ we notice that, recalling the definition of the compact set $K$ \eqref{defK}, it holds
\begin{equation*}
    \int_{\C \setminus K} u_{F_{\j}}(z) \, dA(z) \leq \int_{\C \setminus K} \dfrac{1}{\|f_{\j}\|^2} u_F(z) \, dA(z) \leq \dfrac{ \varepsilon^2}{\|f_{\j}\|^2},
\end{equation*}
and therefore
\begin{align}\label{eq:estimate on FJ}
    \int_{\Omega^{(n)}} u_{F_{\j}}(z) \, dA(z) &\leq \int_{\Omega^{(n)} \cap K} u_{F_{\j}}(z) \, dA(z) +  \dfrac{ \varepsilon^2}{\|f_{\j}\|^2} \nonumber \\
                                               &\leq \S_{\j}(|\Omega^{(n)} \cap K|) + \dfrac{ \varepsilon^2}{\|f_{\j}\|^2}.
\end{align}
To estimate the contribution of $H^{(n)}_{\k - \j}$ we notice that, by definition of $\j$, it holds $\|h^{(n)}_k\| = 1$ for every $k=\j+1, \ldots,\k$, and therefore from \eqref{estHonK} it follows
\begin{equation*}
    \int_{K} u_{H^{(n)}_{\k-\j}}(z) \, dA(z) \leq \int_K u_{H^{(n)}}(z) \, dA(z) \leq \varepsilon^2,
\end{equation*}
which in turn implies that
\begin{align}\label{eq:estimate on HK-J}
    \int_{\Omega^{(n)}} u_{H^{(n)}_{\k-\j}}(z) \, dA(z) &\leq \int_{\Omega^{(n)} \setminus K} u_{H^{(n)}_{\k-\j}}(z) \, dA(z) +  \varepsilon^2 \nonumber \\
                                               &\leq \S_{\k-\j}(|\Omega^{(n)} \setminus K|) + \varepsilon^2.
\end{align}
Combining the estimates \eqref{eq:estimate on FJ} and \eqref{eq:estimate on HK-J} we obtain
\begin{align*}
    \int_{\Omega^{(n)}} u_{\widehat{M}_{\j}^{(n)}}(z) \, dA(z) &=  \int_{\Omega^{(n)}} u_{F_{\j}}(z) \, dA(z) + \int_{\Omega^{(n)}} u_{H^{(n)}_{\k-\j}}(z) \, dA(z) \\
    &\leq \S_{\j}(|\Omega^{(n)} \cap K|) + \S_{\k-\j}(|\Omega^{(n)} \setminus K|)  + \left(1+\dfrac{1}{\|f_{\j}\|^2}\right) \varepsilon^2 \\
    &\leq \max_{0 \leq t \leq s} \left( \S_{\j}(t) + \S_{\k-\j}(s-t) \right) + \left(1+\dfrac{1}{\|f_{\j}\|^2}\right) \varepsilon^2
\end{align*}
Finally, combining the last estimate with the optimality of the block $\j$ \eqref{eq:optimality block J} and the fact that $\varepsilon$ is arbitrary we conclude
\begin{equation*}
    \S_{\k}(s) \leq \max_{0 \leq t \leq s} \left( \S_{\j}(t) + \S_{\k-\j}(s-t) \right).
\end{equation*}
However, from the inductive hypothesis we know that the suprema for indices less than $\k$ are always attained, and therefore from Lemma \ref{lem:convexity of the supremum} it follows that it must be $\j=\k$, for otherwise we would have a contradiction.

Having proved that $\j=\k$ we conclude that $\widehat{F}$ is optimal, thus concluding the proof.

\section{Extension to concentration operators for the wavelet transform}\label{sec:extension to the abstract setting}

Our main theorem along with its proof carries over to a more general setting, under a mild assumption (see \eqref{assumption:localization operators are positive} below) and with minor adjustments. In particular, we can extend our result to \emph{localization operators} associated with an abstract wavelet transform. While defining the setting, we will outline the parallels between the general setting and that in Fock space, whilst also highlighting any differences.

Let $\G$ be a locally compact Hausdorff group that is not compact. The group law will be denoted by the multiplicative notation. Let $\pi \colon \G \to \UU(\H)$ be an irreducible strongly continuous projective representation of $\G$ on an infinite-dimensional Hilbert space $\H$. We ask for $\G$ noncompact because otherwise the representation would be finite-dimensional (see, for example \cite[][Theorem 5.2]{follandbook}) and the problem would become trivial. We recall that strong continuity of the representation means that the map
    \begin{equation}\label{eq:strong continuity representation}
        x \in \G \mapsto \pi(x) \psi \in \H
    \end{equation}
    is continuous for every $\psi \in \H$. The representation $\pi$ is \emph{square-integrable} if there exists a vector $\psi \in \H \setminus \{0\}$ such that
    \begin{equation*}
        c_{\psi} = \int_{\G} |\langle \psi, \pi(x) \psi \rangle|^2 \, d\mu(x)
    \end{equation*}
    is finite, where $d\mu$ denotes the left Haar measure on $\G$. In this case, any normalized vector $\psi$ for which the constant $c_{\psi}$ is finite is called an \emph{admissible wavelet}. Upon renormalizing the Haar measure, we can suppose that $c_{\psi}=1$. Given an admissible wavelet, one can define the associated \emph{wavelet transform} of an element $f \in \H$ as
    \begin{equation}\label{eq:definition wavelet transform}
        W_{\psi}f(x) = \langle f, \pi(x) \psi \rangle, \quad x \in \G.
    \end{equation}
    With the chosen normalization, $W_{\psi}$ is an isometry from $\H$ to $L^2(\G)$. 
    \begin{remark}\label{remark:projective representation}
    The fact that $W_{\psi}$ is an isometry is a standard fact if $\pi$ is a ``true'' representation (see \cite[][Theorem 7.2]{wong}), but it is well-known also when $\pi$ is a genuine projective representation (see \cite[][Section 4]{christensen_projective}). Nevertheless, we want to give a brief and self-contained explanation of the reason why the theory works even in the projective case. In this situation, it holds
    \begin{equation*}
        \pi(xy) = \omega(x,y) \pi(x) \pi(y), \quad x,y \in \G,
    \end{equation*}
    where $\omega \colon \G \times \G \to U(1)$ is the continuous phase-factor associated with the projective representation. One can define a new group $\widetilde{\G}$, also called the Mackey obstruction group, whose underlying set is $\G \times U(1)$ and whose operation is defined as
    \begin{equation*}
        (x,\tau) \cdot (y,\lambda) = (xy, \overline{\omega(x,y)} \tau \lambda). 
    \end{equation*}
    Then, one can also define a \emph{true} representation $\widetilde{\pi}$ of $\widetilde{\G}$ on $\H$ as
    \begin{equation*}
        \widetilde{\pi}(x,\tau) = \tau \pi(x), \quad (x,\tau) \in \widetilde{\G},
    \end{equation*}
    and notice that also $\widetilde{\pi}$ is irreducible and strongly continuous. Next, we notice that if $\psi$ is an admissible wavelet for $\pi$, then for every $f \in \H$ and for every $(x,\tau) \in \widetilde{\G}$ it holds
    \begin{equation}\label{eq:relationship wavelet transform on G and its lift}
        \langle f, \widetilde{\pi}(x,\tau) \psi \rangle = \overline{\tau}W_{\psi}f(x).
    \end{equation}
    From this observation it follows that $\psi$ is admissible also for $\widetilde{\pi}$. Moreover, equation \eqref{eq:relationship wavelet transform on G and its lift} also implies that, up to choosing the correct normalization for the Haar measure of $U(1)$, it holds
    \begin{equation*}
        \|f\|^2 = \int_{\G \times U(1) } |\langle f, \widetilde{\pi}(x,\tau) \psi \rangle|^2 \, d\mu(x)d\tau = \int_{\G} |W_{\psi}f(x)|^2 \, d\mu(x),
    \end{equation*}
    hence $W_{\psi}$ is an isometry.
    \end{remark}

    Finally, given a measurable subset $\Omega \subset \G$, we can introduce the \emph{localization operator on $\Omega$}, denote by $T_{\Omega} \colon \H \to \H$ and defined weakly by
    \begin{equation}\label{eq:definition abstract localization operators}
        \langle T_{\Omega}f,g\rangle = \int_{\Omega} W_{\psi}f(x) \overline{W_{\psi}g(x)} \, d\mu(x), \quad f,g \in \H.
    \end{equation}
    We refer to \cite[][Chapter 12]{wong} for an introduction and for the main properties of localization operators. Once again, we point out the theory in \cite{wong} is developed for wavelet transforms associated with a ``true'' representation, but thanks to the observations in Remark \ref{remark:projective representation} (in particular thanks to \eqref{eq:relationship wavelet transform on G and its lift}) all the results in this setting transfer to the projective case. In particular, it holds that if $\Omega$ has finite measure, then $T_{\Omega}$ is nonnegative and compact (see \cite[][Proposition 13.1]{wong}), thus it admits a sequence of nonincreasing eigenvalues, that we shall denote by the same notation as in the Fock case. 
    \begin{remark}
        The Fock space fits into this framework by taking $\C$ with additive structure as underlying group $\G$, the Fock space $\Fock$ itself as Hilbert space $\H$ and the map $w \in \C \mapsto U_w \in \mathcal{U}(\Fock)$ as projective unitary representation, where $U_w$ are the unitary Weyl operators \eqref{eq:Weyl unitary operators}. Moreover, as suggested by the formula defining the Husimi function \eqref{eq:definition Husimi function}, choosing the reproducing kernel at 0  as admissible wavelet one obtains that the corresponding wavelet transform of $f \in \Fock$ is simply
        \begin{equation*}
            \langle f, k_z \rangle = f(z) e^{-\pi |z|^2/2}, \quad z \in \C.
        \end{equation*}
        Finally, it is clear that Toeplitz operators are the localization operators associated with the chosen wavelet transform, as for every $f,g \in \Fock$ it holds
        \begin{align*}
            \langle T_{\Omega}f,g \rangle = \int_{\Omega} f(z) \overline{g(z)} e^{-\pi |z|^2} \, dA(z) = \int_{\Omega} \left( f(z) e^{-\pi|z|^2/2}\right) \overline{\left(g(z) e^{-\pi|z|^2/2} \right)} \, dA(z).
        \end{align*}
    \end{remark}
    In this abstract setting we have the same mathematical objects as in the Fock case, namely localization operators and their eigenvalues. Therefore, also for localization operators associated with an abstract wavelet transform it makes sense to address the existence of a set maximizing the sum of the first $\k$ eigenvalues \eqref{eq:sum of eigenvalues} among all subsets $\Omega \subset \G$ such that $\mu(\Omega)=s$. In general, it may be possible that there is no set of prescribed measure $s$, since the measure $\mu$ could have atoms. However, we can safely suppose that the Haar measure $\mu$ is nonatomic, since this will be a consequence of Assumption \ref{assumption:localization operators are positive} (see Remark \ref{remark:assumption forces nonatomic} below).

    A careful analysis of the lemmas in Section \ref{sec:preliminary lemmas} and the proof in Section \ref{sec:proof of the main theorem} reveals that the arguments still work in the abstract setting with minor adjustments. In order to highlight the most critical points in the proof, we will go through its various steps, indicating those that, mutatis mutandis, remain unchanged and those where adjustments need to be made.

    The first subtle point is already in the reformulation of the problem in terms of the functional $J_s$. In fact, to define it we need to specify the expression of the set $\Omega_F(s)$. In the Fock case, $\Omega_F(s)$ is defined \eqref{defOmegaFs} as the unique superlevel set of the Husimi function of $F \in \Fock^{\k}$ with measure $s$. However, this definition does not necessarily apply in the general setting because it is not guaranteed that the Husimi function of a tuple $F = (f_1, \ldots, f_{\k}) \in \H^{\k}$, that is
    \begin{equation*}
        u_F = \sum_{k=1}^{\k} |W_{\psi}f_k|^2,
    \end{equation*}
    has an invertible distribution function. Nevertheless, we can still define $\Omega_F(s)$ as a set such that 
    \begin{equation}\label{eq:bathtub principle abstract setting}
        \int_{\Omega} u_F(x) \, d\mu(x) \leq \int_{\Omega_F(s)} u_F(x) \, d\mu(x)
    \end{equation}
    for every $\Omega \subset \G$ such that $\mu(\Omega) = s$. Such a set always exists by the bathtub principle \cite[][Theorem 1.14]{liebloss} but may not be unique.
    \begin{remark}
        Given $F \in \H^{\k} \setminus \{0\}$, let $\mu_F(t) = \mu(\{u_F>t\})$ ($t>0$) denote the distribution function of $u_F$. If $\mu_F$ takes the value $s$, then $\Omega_F(s)$ is unique and it is still the unique superlevel set of $u_F$ with measure $s$. On the other hand, if $\mu_F$ does not take the value $s$ then there exists $t^* \in (0,\infty)$ such that $\mu_F(t^*) < s$ and $\lim_{t \nearrow t^*} \mu_F(t) > s$ (notice that $\mu_F$ is right-continuous). This implies that $u_F$ does not have a superlevel set of measure $s$ and that the level set $\{u_F = t^*\}$ has positive measure. In this case, it holds that $\Omega_F(s) = \{u_F > t^*\} \cup \Omega$, where $\Omega$ is any subset of the level set $\{u_F=t^*\}$ such that $\mu(\Omega) = s - \mu_F(t^*)$ and for which \eqref{eq:bathtub principle abstract setting} holds. Since $\mu(\{u_F=t^*\}) > s-\mu_F(t^*)$, the choice of $\Omega$ is not unique.
    \end{remark}

    Next, up to adjusting the constants, one can see that all the lemmas proved in Section \ref{sec:preliminary results} still hold in the general setting, with the only exception of Lemma \ref{lem:convexity of the supremum}. In fact, in proving this lemma one uses the fact that Toeplitz operators associated with any set of positive measure are positive operators, while in the abstract setting localization operators are, in general, only nonnegative. Therefore, to adapt the proof to the general setting we have to make the following assumption:
    \begin{center}
        For every subset $\Omega \subset \G$ such that $\mu(\Omega)>0$ it holds
        \begin{equation}\label{assumption:localization operators are positive}
            \langle T_{\Omega} f, f \rangle = \int_{\Omega} |W_{\psi}f(x)|^2 \, d\mu(x) > 0, \quad \forall f \in \H \setminus \{0\}.
        \end{equation}
    \end{center}
    Under this assumption, Lemma \ref{lem:convexity of the supremum} holds also in the abstract setting. 
    \begin{remark}\label{remark:assumption forces nonatomic}
        If the Haar measure $\mu$ were atomic, then the group $\G$ would be discrete. Then, it is immediate to see that Assumption \ref{assumption:localization operators are positive} cannot hold. In fact, it suffices to consider any nonzero function $f \in \H$ that is orthogonal to $\pi(e)\psi$ (such a function exists because $\mathrm{dim}\H > 1$) and notice that
        \begin{equation*}
            \int_{\{e\}} |W_{\psi}f(x)|^2 \, d\mu(x) = 0
        \end{equation*}
        where $\mu(\{e\}) > 0$. 
    \end{remark}
    
    Turning to the proof in Section \ref{sec:proof of the main theorem}, one considers a maximizing sequence $F^{(n)} \in \H^{\k}$, where each $F^{(n)}$ is an orthonormal system. The first step of the proof remains unaltered. In particular, using the covariance property
    \begin{equation*}
        u_{\pi(y)}f(x) = u_f(y^{-1}x), \quad x,y \in \G
    \end{equation*}
    (the analogue of \eqref{eq:translation of the Husimi}) and the fact that Husimi functions vanish at infinity, one can translate each $F^{(n)}$ so that $u_{F^{(n)}}$ all achieve their maximum at the identity element of the group. This allows one to extract a weakly converging subsequence whose limit $F \in \H^{\k}$ is nonzero. Then, one can use Lemma \ref{lem:change the limit to make it orthogonal} to turn the tuple $F$ into an orthogonal system, without affecting nor its Husimi function nor the ones of $F^{(n)}$.

    The second step, where one carefully modifies the sequence $F^{(n)}$ so that its limit $F$ and the tail $H^{(n)}$ enjoy some orthogonality properties, still proceeds in the same way in the abstract setting.
    
    Similarly, the third step is just a matter of computation and carries over verbatim to the abstract setting. We just point out that also in the abstract setting weak convergence $H^{(n)} \rightharpoonup 0$ implies that $u_{H^{(n)}}$ converges uniformly to 0 on compact sets.

    Finally, the fourth step carries over without difficulties, as long as Lemma \ref{lem:convexity of the supremum} holds with strict inequality, which is the case under the assumption \eqref{assumption:localization operators are positive}. 
    
    Hence, in the end we conclude that the following holds.
    \begin{theorem}\label{th:main theorem abstract setting}
        Let $\k \in \N$ and $s>0$. Then, the supremum
        \begin{equation*}
            \sup \left\{ \sum_{k=1}^{\k} \lambda_k(\Omega) \colon \mu(\Omega) = s \right\}
        \end{equation*}
        is attained.
    \end{theorem}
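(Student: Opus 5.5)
The plan is to transfer the proof of Theorem \ref{th:main theorem for J_s} to the abstract setting. First, I will reformulate the problem in terms of orthonormal systems. By Ky Fan's maximum principle, for fixed $\Omega$ with $\mu(\Omega)=s$, $\sum_{k\le\k}\lambda_k(\Omega)$ is the maximum of $\int_\Omega u_F\,d\mu$ over orthonormal $F\in\H^\k$, where $u_F=\sum_k|W_\psi f_k|^2$. Then I set $J_s(F)=\int_{\Omega_F(s)}u_F\,d\mu$, with $\Omega_F(s)$ any bathtub maximizer as in \eqref{eq:bathtub principle abstract setting}; $J_s$ does not depend on which maximizer is chosen. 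This reduces the claim to showing that $\S_\k(s)=\sup J_s(F)$ over orthonormal systems is attained. Indeed, if $F$ is optimal, then $\Omega_F(s)$ is an optimal set, because $\sum_{k\le \k}\lambda_k(\Omega_F(s))\ge J_s(F)=\S_\k(s)$.

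Next, I will check that the preliminary tools survive. Three facts are used repeatedly:
\begin{itemize}
\item[(a)] $0\le u_F\le 1$ for orthonormal $F$, since $\sum_k|\langle f_k,\pi(x)\psi\rangle|^2\le\|\pi(x)\psi\|^2=1$ by Bessel's inequality.
\item[(b)] $\int_\G u_F\,d\mu=\|F\|^2$, by the isometry of $W_\psi$ (Remark \ref{remark:projective representation}).
\item[(c)] Covariance: $u_{\pi(y)F}(x)=u_F(y^{-1}x)$. The phase factor $\omega$ disappears in the modulus, and $J_s$ is invariant because the left Haar measure is left-invariant.
\end{itemize}
With these, Lemma \ref{lem:continuity of J_s} follows by the same bathtub/Cauchy--Schwarz argument. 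Lemmas \ref{lem:change the limit to make it orthogonal} and \ref{lem:telescopic sum of the norms} are pointwise algebra and carry over unchanged. Lemma \ref{lem:continuity of the supremum} needs only $u_F\le1$ and nested bathtub sets, which can be chosen nested. Lemma \ref{lem:change the tail without changing the functional} is pure Hilbert space theory, and $\dim\H=\infty$.

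The facts I still need to prove are these:
\begin{itemize}
\item[(d)] $W_\psi f\in C_0(\G)$. Continuity follows from strong continuity of $\pi$. Vanishing at infinity follows by approximating $f$ and $\psi$ with finite combinations and using that matrix coefficients of an irreducible square-integrable representation lie in $C_0$. Alternatively, it follows from $W_\psi f\in L^2$ together with uniform continuity, as noted in the paper.
\item[(e)] $F^{(n)}\rightharpoonup 0$ implies $u_{F^{(n)}}\to0$ uniformly on compacts. This holds because $\{\pi(x)\psi: x\in K\}$ is norm-compact, and weak convergence is uniform on norm-compact sets.
\end{itemize}
From (d) I get that $u_F$ attains its maximum. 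Composing with a translation $\pi(y)$, I may then assume the maximum is attained at the identity $e$. So Step I works: if the weak limit were $0$, then $u_{F^{(n)}}(e)\to0$, hence $J_s(F^{(n)})\le s\,\|u_{F^{(n)}}\|_\infty\to0$. Steps II--III carry over verbatim using (b) and (e) with a compact $K\subset\G$ satisfying $\int_{\G\setminus K}u_F<\eps^2$.

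The main obstacle is Lemma \ref{lem:convexity of the supremum}, the strict subadditivity used in the induction of Step IV. I would do the induction on $\k$ exactly as in Step IV, and in the inductive step invoke the abstract version of that lemma. Optimal sets $\Omega^*_\m(t)$ and $\Omega^*_{\k-\m}(s-t)$ exist by the inductive hypothesis. They are superlevel sets, up to a piece of a level set, of $C_0$ functions at a positive level $t^*$, so they are contained in compact sets $K_1$ and $K_2$. Because $\G$ is noncompact, there is $y$ with $yK_2\cap K_1=\emptyset$, since $K_1K_2^{-1}$ is compact. The endpoints $t=0,s$ are handled as before.

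The strict inequality in Ky Fan's principle is where assumption \eqref{assumption:localization operators are positive} enters. For the union $\Omega_1\cup\Omega_2$, take $E$, the span of the top $\m$ eigenvectors of $T_{\Omega_1}$, and $E'$, the span of the top $\k-\m$ eigenvectors of $T_{\Omega_2}$. Then
\[
\sum_{k\le\k}\lambda_k(\Omega_1\cup\Omega_2)\ge \operatorname{tr}(T_{\Omega_1\cup\Omega_2}P_{E+E'})\quad\text{if }\dim(E+E')=\k,
\]
and positivity of $T_{\Omega_2}$ on $E$ plus $T_{\Omega_1}$ on $E'$ gives the strict gain. If $E+E'$ has dimension less than $\k$, I add extra directions, on which positivity again gives a strict gain. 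Some care is needed here, so I would instead argue through the operator inequality $T_{\Omega_1\cup\Omega_2}=T_{\Omega_1}+T_{\Omega_2}$, bounding $\sum_{k\le\k}\lambda_k(A+B)$ from below by a trace over a $\k$-dimensional subspace containing $E$. With the lemma in hand, Step IV gives $\j=\k$, so $\widehat F$ is optimal.
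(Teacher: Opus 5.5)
Your proposal is correct and follows the paper's route. Like the paper, you check that the Fock-space argument transfers: the reduction to $J_s$, the preliminary lemmas, and Steps I--IV. The only new ingredients are that $u_F$ is uniformly continuous and vanishes at infinity, that weak convergence gives uniform convergence on compacts, and that assumption \eqref{assumption:localization operators are positive} enters only through the strict inequality in Lemma \ref{lem:convexity of the supremum}. You give more detail than the paper there (compactness of optimal sets, a disjoint translate with $y\notin K_1K_2^{-1}$, and the explicit Ky Fan trace bound). One correction: the $\k$-dimensional subspace in your last sentence must contain $E+E'$, not only $E$, or the $T_{\Omega_2}$ term is not controlled.
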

    \subsection{Examples}\label{sec:examples}
    Theorem \ref{th:main theorem abstract setting} applies to several settings. In some of these, the existence result is new even for $\k=1$.
        \subsubsection{Multidimensional Fock space}\label{sec:multidimensional Fock space}
        The definition of the Fock space can be straightforwardly extended for entire functions on $\C^d$ for every $d \in \N$. We denote by $\Fock(\C^d)$ the Fock space for entire functions on $\C^d$. As in the case $d=1$, for $\k=1$ it was proved in \cite{nicolatilli_fk} that the first eigenvalue is maximized by balls in $\C^d$. However, differently from the case $d=1$, there is no analog of the result in \cite{nicola_riccardi_tilli_partial_sums} under some symmetry assumptions on the domains under consideration
        (in the multi-dimensional setting, the analog of a circularly symmetric domain would be a Reinhardt domain, see \cite{svela}).
        \subsubsection{Short-time Fourier transform}
        The Fock space can be seen as a particular example arising from the short-time Fourier transform. Given a normalized function $g \in L^2(\R^d) \setminus\{0\}$, the short-time Fourier transform of $f \in L^2(\R^d)$ with window $g$ is defined as
        \begin{equation}\label{eq:definition STFT}
            V_gf(x,\xi) = \int_{\R^d} f(t) \overline{g(t-x)} e^{-2\pi i \xi \cdot t} \, dt = \langle f, M_{\xi}T_xg \rangle_{L^2(\R^d)}\quad (x,\xi) \in \R^d \times \R^d,
        \end{equation}
        where $T_x$ denotes the translation operator by $x$ and $M_{\xi}$ the modulation operator by $\xi$ (see \cite{grochenig} for more details on the short-time Fourier transform). The short-time Fourier transform fits into the abstract framework taking $\G = \R^d \times \R^d$ with the additive structure, $\H = L^2(\R^d)$ and the projective representation given by time-frequency shifts:
        \begin{equation*}
            (x,\xi) \in \R^d \times \R^d \mapsto M_{\xi}T_x \in \mathcal{U}(L^2(\R^d)).
        \end{equation*}
        Then, if one chooses $g$ as admissible wavelet the corresponding wavelet transform is nothing but the short-time Fourier transform, as it is immediate to notice from the right-hand side of \eqref{eq:definition STFT}. Hence, Theorem \ref{th:main theorem abstract setting} applies to localization operators associated with the short-time Fourier transform, as long as for the window $g$ the assumption \eqref{assumption:localization operators are positive} is satisfied. For example, when $d=1$ if $g$ is a Hermite function (or a finite linear combination of Hermite functions) then the function $V_gf$ is real-analytic for every $f \in L^2(\R)$, and therefore assumption \eqref{assumption:localization operators are positive} is satisfied. Hence, our Theorem \ref{th:main theorem abstract setting} applies to the short-time Fourier transform with Hermite window. Even for $\k=1$, this result is new, with the only exception of the case where the window function is the normalized Gaussian (first Hermite function) $g(t) = 2^{d/4}e^{-\pi |t|^2}$. In fact, in this case one can prove that
        \begin{equation}\label{eq:relationship between STFT and Fock space}
            |V_{g}f(x,-\xi)|^2 = |\mathcal{B}f(z)|^2e^{-\pi |z|^2}, \quad z = x+i\xi \in \C^d
        \end{equation}
        where $\mathcal{B}f$ denotes the Bargmann transform of $f$, which is an entire function in the Fock space $\Fock(\C^d)$. The relationship \eqref{eq:relationship between STFT and Fock space}, along with the fact that the Bargmann transform is an isometry from $L^2(\R^d)$ into $\Fock(\C^d)$ (see \cite[][Proposition 3.4.1]{grochenig}), implies that one can equivalently consider localization operators for the short-time Fourier transform with Gaussian window and Toeplitz operators on the Fock space $\Fock(\C^d)$, which were already considered in the previous Section \ref{sec:multidimensional Fock space}. On the other hand, if one considers the case of higher order Hermite functions $g_n$ for $n \in \N$ (where $g_0$, that is the first Hermite function, is exactly the normalized Gaussian). In this case, a relationship analogous to \eqref{eq:relationship between STFT and Fock space} holds, where the Bargmann transform is replaced by the \emph{true polyanalytic Bargmann transform} of order $n$, which maps isometrically $L^2(\R)$ into the \emph{true polyanalytic Fock space} of order $n$ (see \cite{abreu_polyfock, abreu_feichtinger} for precise definitions of these objects). Whereas localization operators and concentration estimates have been studied in this particular setting \cite{abreu2021donoho}, we are not aware of any result concerning the optimality of sets in the spirit of our theorem.
        
        \subsubsection{Wavelet transform}
        Another class of interesting examples to which Theorem \ref{th:main theorem abstract setting} applies comes from ``usual'' wavelet transform.
        Consider a function $\psi \in L^2(\R) \setminus \{0\}$ satisfying the admissibility condition
        \begin{equation*}
            \int_{\R} |\widehat{\psi}(\xi)|^2 \, \dfrac{d\xi}{|\xi|} < \infty,
        \end{equation*}
        where the Fourier transform of $\psi$ is defined as
        \begin{equation*}
            \widehat{\psi}(\xi) = \dfrac{1}{\sqrt{2\pi}} \int_{\R} \psi(t) e^{-i \xi t} \, dt, \quad \xi \in \R.
        \end{equation*}
        For simplicity, we assume that the window $\psi$ belongs the the Hardy space
        \begin{equation*}
            H^2(\R) = \{f \in L^2(\R) \colon \widehat{f}(\xi) = 0 \text{ for a.e. } \xi < 0 \},
        \end{equation*}
        endowed with the norm inherited from $L^2(\R)$. Then, the wavelet transform of a function $f \in H^2(\R)$ with respect to the wavelet $\psi$ is defined as
        \begin{equation*}
            W_{\psi}f(b,a) = \int_{\R}f(t) \dfrac{1}{\sqrt{a}}\overline{\psi \left(\dfrac{t-b}{a} \right)} \, dt = \langle f, T_b D_a  \psi \rangle, \quad (b,a) \in \R \times \R_+.
        \end{equation*}
        where $T_b$ denotes the translation operator by $b$ and $D_a$ the dilation operator by $a$. This fits into the abstract framework taking the ``$ax+b$ group'' as group $\G$, $H^2(\R)$ as Hilbert space $\H$ and as unitary irreducible representation the one given by
        \begin{equation*}
            (b,a) \in \R \times \R_+ \mapsto T_b D_a  \in \mathcal{U}(H^2(\R)).
        \end{equation*}
        Notice that the restriction to the Hardy space is necessary, since this representation is not irreducible on $L^2(\R)$. 
        
        As for the short-time Fourier transform, assumption \eqref{assumption:localization operators are positive} is satisfied for some choices of the wavelet $\psi$. A particularly relevant choice for the wavelet is the so-called Cauchy wavelet $\psi_{\beta} \in H^2(\R)$ ($\beta>0$) in which case the wavelet transform of any $f \in H^2(\R)$ is analytic ---even more, it belongs to an appropriate weighted Bergman space. In this setting, it was proved \cite{ramos-tilli} that for $\k=1$ optimal sets are (hyperbolic) balls, while for $\k>1$ no result is known, even under symmetry assumptions on the sets taken into account. 
        
        \subsubsection{Paley--Wiener space and the Donoho--Stark conjecture} The last ---and arguably the most relevant--- example we give is the \emph{Paley--Wiener space}, which is defined by
        \begin{equation*}
            PW \coloneqq \{f \in L^2(\R) \colon \mathrm{supp}\, \widehat{f} \subseteq [-\pi,\pi] \}.
        \end{equation*}
        This is a well-known reproducing kernel Hilbert space, whose reproducing kernel at $x \in \R$ is
        \begin{equation*}
            \mathrm{sinc}(y-x) = \dfrac{\sin(\pi(y-x))}{\pi(y-x)} = T_x \mathrm{sinc}(y), \quad y \in \R.
        \end{equation*}
        In this setting, localization operators are also called \emph{time-frequency localization operators}. This class of operators has received a great deal of attention, starting from the works by Landau, Pollak and Slepian \cite{prolate_I, prolate_II, prolate_III, prolate_IV} right through to more recent results; a non-exhaustive list of examples is \cite{kulikov2024, kulikov2026sharp, kulikov_larsen2026sharp, marcerca_romero_speckbacher}. 
        
        The Paley--Wiener space and its time-frequency localization operators partially fit in the abstract framework taking $\G = \R$ with the additive structure, $\H=PW$ and the representation $x \in\R \mapsto T_x \in \mathcal{U}(PW)$, which is not irreducible (the subspace of functions with Fourier transform supported over the same subset of $[-\pi,\pi]$ is invariant). Nevertheless, the hypothesis of irreducibility is used to guarantee that the wavelet transform is an isometry, which is still the case in this setting if one takes the $\mathrm{sinc}$ function as wavelet. In fact, with this choice the wavelet transform is simply the identity:
        \begin{equation*}
            \langle f, T_x \mathrm{sinc} \rangle = f(x), \quad x \in \R.
        \end{equation*}
        Moreover, the assumption \eqref{assumption:localization operators are positive} is satisfied since every $f \in PW$ is analytic. Hence, Theorem \ref{th:main theorem abstract setting} in the Paley--Wiener setting reads as follows.
        \begin{theorem}\label{th:main theorem for PaleyWiener}
            Let $\k \in \N$ and $s>0$. Then, the supremum
            \begin{equation*}
                \sup \left\{ \sum_{k=1}^{\k} \int_{\Omega} |f_k(x)|^2 \, dx \colon |\Omega|=s,\ f_1, \ldots, f_{\k} \in PW \textrm{ orthonormal system}\right\}
            \end{equation*}
            is attained.
        \end{theorem}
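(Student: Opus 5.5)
The plan is to re-run the four-step scheme of Section \ref{sec:proof of the main theorem}, with $\G=\R$, $\H=PW$, wavelet $\mathrm{sinc}$, and $u_F=\sum_k|f_k|^2$ on $\R$. Theorem \ref{th:main theorem abstract setting} formally assumes irreducibility. Rather than invoke it as a black box, I will check each ingredient directly. By the bathtub principle the supremum equals $\sup J_s(F)$ over orthonormal $F\in PW^\k$, and an optimal $F$ together with $\Omega_F(s)$ attains it.

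\textbf{Ingredients.} For $f\in PW$ we have $\int_\R|f|^2=\|f\|^2$ (the wavelet transform is the identity). We also have $|f(x)|=|\langle f,T_x\mathrm{sinc}\rangle|\le\|f\|$, and for an orthonormal system $u_F(x)=\sum_k|\langle f_k,T_x\mathrm{sinc}\rangle|^2\le\|T_x\mathrm{sinc}\|^2=1$ by Bessel's inequality. Translation covariance $u_{T_yF}(x)=u_F(x-y)$ is immediate. Each $f\in PW$ is continuous and vanishes at infinity by Riemann--Lebesgue, so $u_F$ attains its maximum. If $H^{(n)}\rightharpoonup0$, then $u_{H^{(n)}}\to0$ uniformly on compact sets. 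This holds because $x\mapsto T_x\mathrm{sinc}$ is norm continuous, so $\{T_x\mathrm{sinc}:x\in K\}$ is norm compact, and weak convergence is uniform on norm-compact sets.

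Lemma \ref{lem:continuity of J_s}, Lemma \ref{lem:change the limit to make it orthogonal} and Lemma \ref{lem:telescopic sum of the norms} are purely algebraic, or use only $\int u_f=\|f\|^2$ and Cauchy--Schwarz, so they hold verbatim. Lemma \ref{lem:continuity of the supremum} uses only $u_F\le1$. Lemma \ref{lem:change the tail without changing the functional} is a Hilbert space statement, and $\dim PW=\infty$.

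\textbf{Main point: strict superadditivity.} The step needing care is Lemma \ref{lem:convexity of the supremum}, which uses positivity \eqref{assumption:localization operators are positive} and boundedness of optimal sets. Positivity holds because $f\in PW\setminus\{0\}$ extends to an entire function, so its real zeros are discrete and $\int_\Omega|f|^2>0$ whenever $|\Omega|>0$. Boundedness needs some care, since optimal sets are superlevel sets $\{u_F>t\}$ with $t>0$ (or such a set plus part of a level set). Because $u_F$ is continuous and vanishes at infinity, these sets are bounded, and we can translate one far from the other.

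Strictness then follows as in the paper. Let $A,B$ be disjoint sets attaining $\S_\m(t)$ and $\S_{\k-\m}(s-t)$. By Ky Fan's principle, $\sum_{k\le\k}\lambda_k(T_{A\cup B})\ge\sum_{k\le\m}\lambda_k(T_A)+\sum_{k\le\k-\m}\lambda_k(T_B)$. To see strictness, take orthonormal eigenfunctions $e_1,\dots,e_\m$ of $T_A$ and $e'_1,\dots,e'_{\k-\m}$ of $T_B$. The trace of $T_{A\cup B}$ on their span is at least each of these sums plus a strictly positive cross term (of the form $\int_B u$ or $\int_A u$) coming from positivity. If the two spans intersect, we first perturb to a nondegenerate configuration, or work with the Ky Fan maximization over $\k$-dimensional subspaces directly, as in the paper.

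\textbf{Conclusion.} Steps I--IV then apply unchanged. In Step I, translate so that $u_{F^{(n)}}$ peaks at $0$; the weak limit is nonzero since $u_{F^{(n)}}(0)=\sum_k|\langle f^{(n)}_k,\mathrm{sinc}\rangle|^2$. The induction on $\k$ yields an orthonormal maximizer $\widehat F$. I expect the only genuine obstacle to be confirming that the lack of irreducibility is harmless. It is, because the only consequence of irreducibility used anywhere is the isometry of the wavelet transform, which here is trivial, and Plancherel on $PW$ replaces the orthogonality relations.
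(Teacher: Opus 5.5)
Your proposal is correct and follows essentially the paper's route. The paper likewise notes that irreducibility is only needed to make the wavelet transform an isometry (automatic here, since the $\mathrm{sinc}$-transform is the identity on $PW$), gets positivity \eqref{assumption:localization operators are positive} from analyticity, and reruns Steps I--IV through the abstract theorem; you simply verify the remaining ingredients (Bessel bound, vanishing at infinity, local uniform convergence, boundedness of optimal sets) explicitly. In the strictness step no perturbation is needed: if the two spans intersect, take any $\k$-dimensional subspace containing both, and positivity of $T_A$ on the orthogonal complement of the first span inside it (dimension $\k-\m\geq 1$) still gives the strict gain.
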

        Of great importance is the case $\k=1$, which corresponds to the existence part of the statement of Theorem \ref{th:theorem PW introduction}. In fact, the first eigenvalue of the time-frequency localization over a set $\Omega$ is simply given by
        \begin{equation*}
            \lambda_1(\Omega) = \sup_{\|f\|_2=1} \int_{\Omega} |f(x)|^2 \, dx.
        \end{equation*}
        The maximization of $\lambda_1(\Omega)$ among all sets $\Omega\subset\R$ with the same measure is the topic of the well-known Donoho--Stark conjecture \cite{donoho_stark}, which states that intervals are the optimal sets. The conjecture was proved to be true first for $s<0.8/2\pi$ in \cite{donoho_stark_note_rearrangements}, later improved to $s<1/2\pi$ in \cite{baeza2023uncertainty}. Very recently, it was settled in full generality in \cite{abreu_speckbacher_donoho_stark}.
        
        Finally, the fact that the optimal set must be the union of finite intervals follows from Remark \ref{remark:optimal sets are bounded} and the fact that functions in Paley--Wiener are analytic. 
\section*{Acknowledgments}
F.N. is a fellow of the Accademia delle Scienze di Torino and a member of the Societ\`a Italiana di Scienze e Tecnologie Quantistiche (SISTEQ). 

\section*{Statements and Declarations}
\textbf{Competing interests.} The authors have no financial or non-financial interests to disclose.

\textbf{Data availability statement.} This manuscript has no associated data. 
	

\end{document}